\newtheorem{thm}{Theorem}[section]
\newtheorem{coro}[thm]{Corollary}
\newtheorem{prop}[thm]{Proposition}
\theoremstyle{definition}
\newtheorem{expl}[thm]{Example}
\newtheorem{defi}[thm]{Definition}
\newtheorem{remark}[thm]{Remark}
\newtheorem{quest}[thm]{Question}
\newtheorem{notation}[thm]{Notation}
\begin{document}

\title{Intersections of Deligne--Lusztig varieties and Springer fibres}

\author{Zhe Chen}

\address{Department of Mathematics, Shantou University, Shantou, China}

\email{zhechencz@gmail.com}

\begin{abstract}
In this paper we prove a direct geometric relation between Deligne--Lusztig varieties and Springer fibres in type $\mathsf{A}$: For any rational unipotent element, the Springer fibre cuts out a unique component of a specific Deligne--Lusztig variety; moreover, this component forms an open dense subset of a component of the Springer fibre. This boils down to a map from the unipotent variety to the Weyl group, and combines several constructions with a combinatorial flavour (like Weyr normal forms, Robinson--Schensted correspondence, and Spaltenstein's and Steinberg's labellings); it also provides a geometric interpretation of a classical dimension formula of unipotent centralisers. 
\end{abstract}

\maketitle

\tableofcontents

\section{Introduction}\label{section:Intro}

Let $\mathbb{G}$ be a connected reductive group over a finite field $\mathbb{F}_q$, and let $F$ be the geometric Frobenius endomorphism on $G:=\mathbb{G}\times_{\mathbb{F}_q}\overline{\mathbb{F}}_q$. Fix an $F$-stable Borel subgroup $B\subseteq G$ and an $F$-stable maximal torus $T\subseteq B$. There are two important classes of varieties lying in the flag variety $G/B$, namely, Deligne--Lusztig varieties and Springer fibres.

\vspace{2mm} Deligne--Lusztig varieties $X_w$ (see Definition~\ref{defi:DL}) are parametrised by $w\in W(T):=N(T)/T$, and their $\ell$-adic cohomology (with coefficients in suitable local systems) affords all the irreducible representations of $G^F=\mathbb{G}(\mathbb{F}_q)$. Meanwhile, Springer fibres $\mathcal{B}_u$ (see Definition~\ref{defi:springer}) are parametrised by the unipotent elements $u\in G$, and their $\ell$-adic cohomology affords all the irreducible representations of $W(T)$. Since their births in the seminal works \cite{DL1976} and \cite{Springer_1976_TrigSum}, respectively, these two families of varieties play crucial roles in the study of representations of Lie type groups and Weyl groups. In this paper we give a study of their relations in the case of type $\mathsf{A}$. 

\vspace{2mm} In the remaining part of this paper we take $\mathbb{G}$ to be $\mathrm{GL}_n$, and let $B$ be the standard upper Borel subgroup and $T$ the diagonal maximal torus. 

\vspace{2mm} Indeed recently we found that, for a specific unipotent $u$, the intersection $\mathcal{B}_{u,w}:=X_w\cap \mathcal{B}_u$ appears in the study of smooth representations of the profinite group $\mathrm{GL}_{d}(\mathbb{F}_q[[\pi]])$, where $d$ is a divisor of $n$ (see \cite{Chen_2020_twistetcent}); this serves the initial motivation for our attention on the relations between these two varieties. On the other hand, on the level of representations there are already striking relations found between these two constructions (see for example \cite{Lusztig_Green_Functions_CharSh} and \cite[1.3.5]{Bezru--Varshav_2021_L-packets}). In this paper, instead of representations we focus on the geometric relations. We remark that a similar theme has been considered over $\mathbb{C}$ in \cite{Tymoczko_LinearConditionFlag}, in which case Hessenberg varieties and Schubert cells took the roles of Springer fibres and Deligne--Lusztig varieties. In any case, providing the facts that both $X_w$ and $\mathcal{B}_u$ are vital in geometric representation theory, and that they share the same ambient space $G/B$, it is very interesting and natural to seek their geometric interactions. 

\vspace{2mm}  We found the following surprising simple relation between the components: (See Theorem~\ref{thm:main1} for the formal statement.)
\begin{itemize}
\item[(a)] Any Springer fibre at a rational unipotent element of $G$ has a component containing a component of a specific Deligne--Lusztig variety as a dense open subset; moreover, this Deligne--Lusztig component forms the whole intersection. 
\item[(b)] Conversely, every component of a Deligne--Lusztig variety at an involution of a specific shape is a dense open subset of a component of some Springer fibre.
\end{itemize}
As we will see in Theorem~\ref{thm:main1}, this component relation boils down to a map from the unipotent variety to the Weyl group.

\vspace{2mm} In Section~\ref{sec:DL et Spr} we make some preparations on Deligne--Lusztig varieties and Springer fibres.

\vspace{2mm} In Section~\ref{sec:prelim} we give a brief recall of some ingredients used in the proof of Theorem~\ref{thm:main1}, like Weyr normal forms, Robinson--Schensted correspondence, and Spaltenstein's and Steinberg's descriptions of Springer components.

\vspace{2mm} In Section~\ref{sec:components} we present the proof of Theorem~\ref{thm:main1}, which is a careful combination of the above constructions. We also derive a geometric proof of a classical dimension formula of unipotent centralisers (see Corollary~\ref{coro:dim of unip centraliser}).

\vspace{2mm} In Section~\ref{sec:further remarks} we give a few remarks, including three illustrating examples (one concerning the boundary of the theorem, one concerning a uniqueness property, and one concerning an opposite phenomenon), and a short discussion on the representations associated with $\mathcal{B}_{u,w}$.

\vspace{2mm} Throughout this paper: We use the convention notation $g^h={^{h^{-1}}g}=h^{-1}gh$ for elements $g,h$ in an algebraic group; all varieties are assumed to be reduced; by a component we always mean an irreducible component.

\vspace{2mm} \noindent {\bf Acknowledgement.} The author thanks George Lusztig and Alexander Stasinski for helpful comments and suggestions, and thanks Guangyi Yue for a helpful communication. During the preparation of this work the author is partially supported by the NSFC funding no.12001351.

\section{Deligne--Lusztig varieties and Springer fibres}\label{sec:DL et Spr}

In this section we recall some basics of Deligne--Lusztig varieties and Springer fibres. The details can be found in \cite{DL1976}, \cite{Shoji1988GeomOrbits}, \cite{Steinberg_occurrence},  \cite{Carter1993FiGrLieTy}.

\begin{defi}\label{defi:DL}
Let $w\in W(T)\cong S_n$. Then the Deligne--Lusztig variety at $w$ is 
$$X_w:=L^{-1}(BwB)/B,$$
where $L\colon G\rightarrow G$ is the Lang isogeny given by $g\mapsto g^{-1}F(g)$.
\end{defi} 

Viewing $G/B$ as the variety of complete flags, one can describe $X_w$ in the following way.

\begin{defi}\label{defi:relative posi}
Let 
$$\mathcal{F}\colon V_0\subseteq V_1\subseteq...\subseteq V_{n} \quad \textrm{and} \quad \mathcal{F}'\colon V'_0\subseteq V'_1\subseteq...\subseteq V'_{n}$$ 
be two complete flags of $V:=(\overline{\mathbb{F}}_q)^n$, with $\dim V_i=\dim V'_i=i$. We say that $\mathcal{F}$ and $\mathcal{F}'$ are in relative position $w\in W(T)=S_{n}$, if 
$$\dim V_i\cap V'_j=\# \left(    \{1,...,i\}\cap \{w(1),...,w(j)\}  \right)$$
for any $i,j\in\{1,...,n\}$. 
\end{defi}

\begin{itemize}\hypertarget{condition (*)}{}
\item[{\bf (*)}] Let $G=\mathrm{GL}_n$ be viewed as the automorphism group of $V$. Then $X_w$ is the variety consisting of the complete flags $\mathcal{F}\colon V_0\subseteq V_1\subseteq...\subseteq V_{n}$ such that $\mathcal{F}$ and $F\mathcal{F}$ are in relative position $w$, namely
$$\dim V_i\cap FV_j=\# \left( \{1,...,i\}\cap \{w(1),...,w(j)\} \right)$$
for any $i,j\in\{1,...,n\}$.
\end{itemize}

\begin{prop}[Deligne--Lusztig]\label{prop:DL property}
The variety $X_w$ is a smooth locally closed subvariety of $G/B$ of pure dimension $l(w)$, where $l(w)$ denotes the length of $w$.
\end{prop}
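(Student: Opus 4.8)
The plan is to extract both assertions from the defining formula $X_w=L^{-1}(BwB)/B$, using the single structural feature of the Lang isogeny that is really needed here: the map $L$ is \emph{\'etale}. I would first recall why. Since $L(gh)=h^{-1}L(g)F(h)$ and the differential of the Frobenius $F$ vanishes identically, the differential of $L$ is, after a translation, $-\mathrm{id}$ on the Lie algebra, hence an isomorphism at every point; equivalently, $L$ exhibits $G$ as a finite \'etale Galois covering of $G$ with group $G^F$ acting by left translation, so in particular $L$ is surjective by Lang's theorem. This is classical (see the references at the start of the section).

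Next I would pull back the double coset. The set $BwB$ is irreducible (it is the image of $B\times B\to G$, $(b_1,b_2)\mapsto b_1\dot w b_2$) and locally closed in $G$, of dimension $\dim B+l(w)$; the dimension comes from the Bruhat decomposition $BwB\cong B\times U_w$ with $U_w=U\cap\,{}^{\dot w}U^-$ of dimension $l(w)$, where $U$ is the unipotent radical of $B$. As $L$ is finite \'etale, $L^{-1}(BwB)$ is a smooth locally closed subvariety of $G$ each of whose components is a finite \'etale cover of the irreducible variety $BwB$, hence irreducible of dimension $\dim B+l(w)$. Now, using that $B$ is $F$-stable, one checks that $L^{-1}(BwB)$ is stable under right translation by $B$, because $L(gb)=b^{-1}L(g)F(b)\in B\cdot BwB\cdot B=BwB$. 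Since $B$ is a special group (an extension of a torus by a unipotent group), $G\to G/B$ is a Zariski-locally trivial $B$-bundle; hence the image $X_w$ of the $B$-saturated set $L^{-1}(BwB)$ is locally closed in $G/B$, the induced map $L^{-1}(BwB)\to X_w$ is a $B$-bundle, and therefore $X_w$ is smooth with every component of dimension $(\dim B+l(w))-\dim B=l(w)$. This proves the proposition.

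A second, more geometric route uses condition {\bf (*)} directly. Put $\gamma=(\mathrm{id},F)\colon G/B\to (G/B)\times(G/B)$ and let $O(w)$ be the $G$-orbit of pairs of flags in relative position $w$, which is locally closed of dimension $\dim G/B+l(w)$; then $X_w=\gamma^{-1}(O(w))$. Because $dF=0$, the differential of $\gamma$ is $(\mathrm{id},0)$ at every point, and because $\mathfrak g$ surjects onto the tangent space of $G/B$ at every point, one gets $d\gamma\bigl(T_x(G/B)\bigr)+T_{\gamma(x)}O(w)=T_{\gamma(x)}\bigl((G/B)\times(G/B)\bigr)$ for every $x\in X_w$; that is, $\gamma$ is transverse to $O(w)$. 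Hence $X_w$ is smooth and pure of codimension $\operatorname{codim}\,O(w)=\dim G/B-l(w)$, i.e.\ of dimension $l(w)$, and it is locally closed since $O(w)$ is.

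The only genuine input in either route is the \'etaleness (equivalently, the separability) of $L$, which reduces to the vanishing of the differential of $F$; this is exactly what forces $X_w$ to be smooth of the expected dimension rather than something worse. The remaining ingredients are routine: that a right-$B$-stable locally closed smooth subvariety of $G$ descends to a locally closed smooth subvariety of $G/B$ (immediate once $G\to G/B$ is a locally trivial $B$-bundle), that $\dim BwB=\dim B+l(w)$ (standard $BN$-pair theory), and, for the second route, the standard identification of $O(w)$ with the flags in relative position $w$ together with $\dim O(w)=\dim G/B+l(w)$.
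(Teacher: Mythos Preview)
Your proposal is correct. The paper itself does not give an argument here; it simply cites \cite[Page~107]{DL1976}. Your second route (the transversality of the Frobenius graph $\gamma=(\mathrm{id},F)$ to the $G$-orbit $O(w)$) is precisely the argument given at that reference, so in that sense you have reconstructed exactly what the citation points to. Your first route, via the \'etaleness of $L$ and descent along the locally trivial $B$-bundle $G\to G/B$, is an equally valid and standard alternative; it has the mild advantage of being more elementary (no tangent-space transversality check), while the second route makes the role of the vanishing of $dF$ more transparent and generalises immediately to arbitrary reductive $G$ without invoking that $B$ is special.
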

\begin{proof}
See \cite[Page~107]{DL1976}.
\end{proof}

\begin{defi}\label{defi:springer}
Let $u\in G$ be a unipotent element. Then the Springer fibre at $u$ is
$$\mathcal{B}_u:=\{ gB\in G/B \mid u^g \in B \}.$$ 
Using the term of flags, $\mathcal{B}_u\subseteq G/B$ can be viewed as the closed subvariety consisting of complete flags $\mathcal{F}\colon V_0\subseteq V_1\subseteq...\subseteq V_{n}$ stabilised by $u$ (namely, $uV_i=V_i$ for all $i$).
\end{defi}

Unlike $X_w$, usually $\mathcal{B}_u$ is singular, but one still has:

\begin{prop}[Spaltenstein, Steinberg]\label{prop:spr property}
The variety $\mathcal{B}_u$ is of pure dimension $v_G-\frac{1}{2}\dim C(u)$, where $v_G$ denotes the number of positive roots and $C(u)$ denotes the conjugacy class of $u$.
\end{prop}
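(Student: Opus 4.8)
The plan is to reduce the statement to the well-known fact that $\mathcal{B}_u$ is equidimensional together with an explicit count of its dimension, both of which can be packaged from Spaltenstein's and Steinberg's work. First I would fix the unipotent $u$ with Jordan type $\lambda=(\lambda_1\geq\lambda_2\geq\cdots)$, a partition of $n$, and recall the two basic geometric inputs: (i) the Springer fibre $\mathcal{B}_u$ is connected and equidimensional (Spaltenstein), so it suffices to compute the dimension of a single, conveniently chosen component; (ii) by Steinberg's analysis of the Steinberg variety $Z=\{(u,gB):u^{g}\in B\}$, the map $Z\to \mathcal{N}$ (the unipotent/nilpotent variety) has all fibres of dimension $\dim \mathcal{N}-\dim C(u)$, since $Z$ is equidimensional of dimension $\dim\mathcal{N}=\dim G/B + v_G$ (here $\dim G/B=v_G$), and the base stratum through $u$ has dimension $\dim C(u)$.

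Concretely, I would carry out the following steps. Step 1: recall that $\dim G = 2v_G + \mathrm{rank}(G)$ and $\dim B = v_G + \mathrm{rank}(G)$, hence $\dim G/B = v_G$. Step 2: invoke the identification of $Z$ with the cotangent bundle picture, or directly Steinberg's count, to get $\dim Z = 2v_G$. Step 3: stratify $Z$ over $\mathcal{N}$ by conjugacy classes; the preimage of the class $C(u)$ is a fibre bundle over $C(u)$ with fibre $\mathcal{B}_u$, so it has dimension $\dim C(u) + \dim \mathcal{B}_u$; since $Z$ is equidimensional and this stratum is (a piece of) it, $\dim C(u) + \dim \mathcal{B}_u = 2v_G$, giving $\dim \mathcal{B}_u = 2v_G - \dim C(u) = v_G - \tfrac12\dim C(u)$ once one notes $\dim C(u)$ is even (it equals $\dim G - \dim Z_G(u)$, and $Z_G(u)$ contains the center plus a unipotent part whose parity is controlled by the classical centraliser-dimension formula). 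Step 4: upgrade "$\dim$" to "pure dimension" by citing Spaltenstein's equidimensionality theorem for $\mathcal{B}_u$ in type $\mathsf{A}$ (indeed in all types), so that every component has the common dimension just computed.

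The main obstacle, and the only genuinely nontrivial ingredient, is the equidimensionality of $\mathcal{B}_u$ — the dimension \emph{count} is elementary once equidimensionality and $\dim Z = 2v_G$ are granted. For type $\mathsf{A}$ one can sidestep even this by an explicit inductive argument on the partition $\lambda$ (peeling off the bottom row or a hook of the Young diagram and fibering $\mathcal{B}_u$ over a smaller partial-flag variety), but the cleanest exposition is simply to attribute purity to Spaltenstein and the dimension formula to Steinberg, since both are classical and are exactly the references already cited before the statement. I would therefore present the proof as a short derivation of the formula $\dim\mathcal{B}_u = v_G - \tfrac12\dim C(u)$ from $\dim Z = 2v_G$ via the stratification above, with a sentence recalling that equidimensionality is due to Spaltenstein.
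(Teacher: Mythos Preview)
Your Step~3 contains a genuine gap. With $Z=\{(u,gB):u^{g}\in B\}$ (this is the Springer resolution $\widetilde{\mathcal{N}}$, which is \emph{irreducible} of dimension $2v_G$), the preimage of a non-regular orbit $C(u)$ is a proper locally closed subset of $Z$, so nothing forces its dimension to equal $2v_G$; for $u=1$ you get $\dim C(u)+\dim\mathcal{B}_u=0+v_G\neq 2v_G$. Hence the claimed identity $\dim C(u)+\dim\mathcal{B}_u=2v_G$ is false in general, and the next line ``$2v_G-\dim C(u)=v_G-\tfrac12\dim C(u)$'' is also arithmetically wrong except when $u$ is regular. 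The correct relation is $\dim C(u)+2\dim\mathcal{B}_u=2v_G$, and that factor of $2$ is precisely what your single-copy $Z$ cannot produce.

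A standard fix is to use the \emph{double} Steinberg variety $\mathrm{St}=\widetilde{\mathcal{N}}\times_{\mathcal{N}}\widetilde{\mathcal{N}}$, whose fibre over $u$ is $\mathcal{B}_u\times\mathcal{B}_u$. Projecting $\mathrm{St}$ to $G/B\times G/B$ and computing over each Bruhat cell shows $\dim\mathrm{St}=2v_G$, which yields the inequality $\dim C(u)+2\dim\mathcal{B}_u\le 2v_G$, i.e.\ $\dim\mathcal{B}_u\le v_G-\tfrac12\dim C(u)$. The reverse inequality (equivalently Steinberg's formula $\dim Z_G(u)-\mathrm{rank}\,G=2\dim\mathcal{B}_u$, or the statement that $C(u)\cap U$ has dimension $\tfrac12\dim C(u)$) needs a separate argument and is exactly the nontrivial input your sketch glosses over; together with Spaltenstein's equidimensionality it finishes the proof. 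The paper itself does not argue any of this and simply refers to Shoji's survey, which is entirely appropriate for a classical fact; but if you want to supply a sketch, the missing factor of $2$ above is the point to repair.
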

\begin{proof}
See e.g.\ \cite[1.2]{Shoji1988GeomOrbits}.
\end{proof}

In this paper we will very often take the viewpoint that elements in $X_w$ and $\mathcal{B}_u$ are flags. We put $\mathcal{B}_{u,w}:=\mathcal{B}_u\cap X_w=\mathcal{B}_u\times_{G/B} X_w$.

\section{Some preliminaries}\label{sec:prelim}

In this section we recall some ingredients needed in the proof of Theorem~\ref{thm:main1}. We first fix the notation that will be used throughout this paper:

\begin{notation}\label{notation:unip notations}
Let $u\in G$ be a unipotent element. Then
\begin{itemize}
\item $J(u)=\mathrm{diag}\{ J_1,..., J_d \}$ is the standard Jordan normal form of $u$, where the $J_i$'s are the Jordan blocks (with non-increasing sizes). 

\item $r_i$ is the size of the Jordan block $J_i$ (that is, $J_i$ is an $r_i\times r_i$-matrix); in particular $r_i\geq r_{i+1}$.

\item $\lambda(u)$ is the Young diagram associated with $J(u)$, that is, a Young diagram whose $i$-th row has $r_i$ boxes.

\item $c_i$ is the number of boxes in the $i$-th column of $\lambda(u)$. For convenience, we also put $c_0=0$. Note that there are totally $r_1$ columns and $c_1=d$ rows.
\end{itemize}
\end{notation}

One ingredient we would need is the so-called Weyr normal form $W(u)$, which is a ``dual'' of the Jordan normal form $J(u)$:
\begin{defi}\label{defi:Weyr}
For a unipotent $u\in\mathrm{GL}_n(\overline{\mathbb{F}}_q)$, the matrix $W(u)$ is blocked upper triangular, and is characterised by the following rules:
\begin{itemize}
\item[(i)] The $i$-th diagonal block is the $c_i\times c_i$ identity matrix.

\item[(ii)] The block just right to the $i$-th diagonal block is of the form $\left(\begin{smallmatrix} I\\ 0 \end{smallmatrix}\right)$, where $I$ denotes the $c_{i+1}\times c_{i+1}$-identity matrix, and $0$ denotes the zero matrix of a suitable size.

\item[(iii)] All other blocks are zero.
\end{itemize}
So $W(u)$ is a blocked matrix of the shape
\begin{equation*}
\begin{bmatrix}
I & \left(\begin{smallmatrix} I\\ 0 \end{smallmatrix}\right) &  0 & 0 & ... & 0\\
0 & I & \left(\begin{smallmatrix} I\\ 0 \end{smallmatrix}\right) & 0 & ... & 0\\
0 & 0 & I & \left(\begin{smallmatrix} I\\ 0 \end{smallmatrix}\right) & ... & 0\\
... & ... & ...  & ... & ... & ... \\
0 & ... & 0  & 0 & I & \left(\begin{smallmatrix} I\\ 0 \end{smallmatrix}\right) \\
0 & 0 & ... & 0 & 0 & I\\
\end{bmatrix},
\end{equation*}
where the $I$'s denote some identity matrices of possibly different sizes and the $0$'s denote some zero matrices of possibly different sizes. 
\end{defi}

Although Weyr normal forms and Jordan normal forms were both discovered  in the second half of the 19th century, the Weyr form appears to be much lesser known; a comprehensive reference on these normal forms is \cite{Omeara_et_al_WeyrBk}. Note that recently there has been a (very different) application of Weyr normal forms in the representation theory of Lie type groups over local rings; see \cite{Stasinski_rep_SL_n_length_two}. 

\begin{remark}
One of the main features we need from Weyr form (instead of Jordan form) is that the block sizes are with respect to the columns of Young diagrams, which allows one to combine the other elements in the proof of Theorem~\ref{thm:main1} in a natural way.
\end{remark}

\begin{prop}\label{prop:weyr}
The unipotent elements $u$ and $W(u)$ are in the same conjugacy class.
\end{prop}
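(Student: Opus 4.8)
The plan is to reduce the statement to the classification of unipotent conjugacy classes in $\mathrm{GL}_n$: two unipotent elements are conjugate if and only if their Jordan types agree, and the Jordan type of a unipotent element $g$ is determined by (and determines) the sequence of ranks $\big(\mathrm{rank}\,(g-I)^k\big)_{k\ge 0}$. So it suffices to check that $\mathrm{rank}\,(u-I)^k=\mathrm{rank}\,(W(u)-I)^k$ for all $k$.

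First I would record what $N:=W(u)-I$ and its powers look like. By Definition~\ref{defi:Weyr}, $N$ is the blocked matrix whose only nonzero blocks are the $(i,i+1)$-blocks, each equal to $\left(\begin{smallmatrix} I\\ 0 \end{smallmatrix}\right)$ of size $c_i\times c_{i+1}$ with top part the $c_{i+1}\times c_{i+1}$ identity (this is well defined because $c_i\ge c_{i+1}$). Multiplying such ``inclusion'' blocks shows inductively that, for each $k\ge 1$, the matrix $N^k$ has as its only nonzero blocks the $(i,i+k)$-blocks for $1\le i\le r_1-k$, each again equal to $\left(\begin{smallmatrix} I\\ 0 \end{smallmatrix}\right)$ with top part the $c_{i+k}\times c_{i+k}$ identity, hence of rank $c_{i+k}$. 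Since these nonzero blocks lie in pairwise distinct block-rows and block-columns,
$$\mathrm{rank}\,N^k=\sum_{i=1}^{r_1-k} c_{i+k}=c_{k+1}+c_{k+2}+\cdots+c_{r_1}.$$

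Then I would compute the same quantity for $u$. Since the Jordan form of $u$ has blocks of sizes $r_1\ge\cdots\ge r_d$, one has $\mathrm{rank}\,(u-I)^k=\sum_{i=1}^d\max(r_i-k,0)$, which is precisely the number of boxes of $\lambda(u)$ lying strictly to the right of the $k$-th column, i.e.\ $c_{k+1}+\cdots+c_{r_1}$ --- this is just the transpose relation between the row lengths $r_i$ and the column lengths $c_j$ of $\lambda(u)$. Comparing with the previous display gives $\mathrm{rank}\,(u-I)^k=\mathrm{rank}\,N^k$ for all $k\ge 0$, hence $u-I$ and $N$ are $\mathrm{GL}_n$-conjugate; conjugating by the same element carries $u=I+(u-I)$ to $I+N=W(u)$, which finishes the argument.

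The proof is essentially bookkeeping, and I do not expect a serious obstacle; the only points requiring a little care are the block-multiplication computation pinning down the shape of $N^k$, and the elementary identity $\sum_{j>k}c_j=\sum_i\max(r_i-k,0)$ relating a partition to its conjugate.
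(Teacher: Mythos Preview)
Your argument is correct. The block-multiplication claim for $N^k$ is easily verified: the product $\left(\begin{smallmatrix} I_{c_{i+1}}\\ 0\end{smallmatrix}\right)\left(\begin{smallmatrix} I_{c_{i+2}}\\ 0\end{smallmatrix}\right)$ is $\left(\begin{smallmatrix} I_{c_{i+2}}\\ 0\end{smallmatrix}\right)$ of size $c_i\times c_{i+2}$, and induction gives the general shape you state; the rank computation and the partition/transpose identity are then exactly as you wrote.

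As for comparison with the paper: the paper gives no argument of its own but simply refers to \cite[2.2.2]{Omeara_et_al_WeyrBk}, where the usual textbook proof proceeds by exhibiting an explicit permutation of the standard basis conjugating $J(u)$ to $W(u)$ (reordering the Jordan basis column-by-column rather than row-by-row in $\lambda(u)$). Your approach is different in flavour --- invariant-theoretic rather than constructive --- and has the virtue of being short and self-contained; the permutation approach, on the other hand, produces an explicit conjugating element, which can be handy elsewhere but is not needed in this paper.
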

\begin{proof}
See e.g.\ \cite[2.2.2]{Omeara_et_al_WeyrBk}.
\end{proof}

\vspace{2mm} Another ingredient we need is the Robinson--Schensted correspondence. Recall that a very basic property of finite group representation theory is the identity $\sum_{\rho} (\dim \rho)^2=\#H$, where $H$ is a finite group and $\rho$ suns over the irreducible representations. If we take $H=S_n$, then this identity can be re-written as
$$\sum_{\lambda}(\#T(\lambda))^2=\#S_n,$$
where $\lambda$ runs over the Young diagrams of $n$ boxes, and $T(\lambda)$ denotes the set of standard $\lambda$-tableaux (we use the convention that the numbers in a standard tableaux go increasingly from left to right and from up to down). The Robinson--Schensted correspondence, which was later generalised by Knuth to a more general situation, gives a combinatorial explanation of this identity.

\begin{prop}[Robinson--Schensted correspondence]\label{prop:RS}
There is an explicit computable bijection between the sets
$$w(-,-)\colon \bigsqcup_{\lambda} T(\lambda)\times T(\lambda) \longrightarrow S_n,$$
satisfying the property $w(P,Q)=w(Q,P)^{-1}$, given by the following algorithm:
\begin{itemize}
\item[(i)] Take $(P,Q)\in  T(\lambda)\times T(\lambda)$. If $n$ is in the $(i,j)$-th box of $Q$, then we remove this box from $Q$, and denote the new tableau by $Q'$.
\item[(ii)] Suppose the number in the $(i,j)$-th box of $P$ is $n'$. We remove this box from $P$ and move $n'$ up by one row to replace the largest number smaller than $n'$.
\item[(iii)] Suppose the number replaced by $n'$ is $n''$, then we move $n''$ up by one row to replace the largest number smaller than $n''$, and so on, until we replaced a number in the first row.
\item[(iv)] Denote the number been replaced from the first row by $w(n)$, and denote the resulting tableau by $P'$.
\item[(v)] Repeat the above process for $n-1$ with the tableaux pair $(P',Q')$, and so on, until we find all $w(n),w(n-1),...,w(1)$. Then
\begin{equation*}
w(P,Q):=
\begin{pmatrix}
1    & 2    & \cdots &  n \\
w(1) & w(2) & \cdots & w(n)
\end{pmatrix}.
\end{equation*}
One often use the word notation $w(P,Q)=w(1)...w(n)$.
\end{itemize}
\end{prop}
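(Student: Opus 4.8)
The plan is to realise $w(-,-)$ as the inverse of Schensted's \emph{row-insertion} procedure. Given $w=w(1)\cdots w(n)\in S_n$, one builds a pair $(P,Q)$ of tableaux as follows: starting from the empty tableau, successively row-insert the values $w(1),w(2),\ldots,w(n)$ into the evolving tableau $P$, where to row-insert a value $x$ one places $x$ in the first row, displacing (``bumping'') the smallest entry strictly larger than $x$ if such an entry exists, then re-inserts the bumped entry into the second row by the same rule, and so on until some entry is finally appended at the end of a row; at the $k$-th step exactly one new box is created, and one records the label $k$ in the corresponding box of $Q$. This yields $(P,Q)\in T(\lambda)\times T(\lambda)$ for some $\lambda$, and the assertion to prove is that the map $(P,Q)\mapsto w$ given by the algorithm in the statement is the two-sided inverse of $w\mapsto(P,Q)$.

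First I would check that row-insertion is well defined: an easy induction on rows shows that each bumping step replaces the smallest entry strictly larger than the one being inserted, so every row stays strictly increasing, and a short transverse argument gives the same for columns; moreover the shape gains exactly one box, located at an outer corner. Next I would verify that one reverse step undoes one insertion step once the last-added box is known: starting from that corner box of $P$ and moving \emph{up} one row at a time --- removing the current entry and using it to displace the \emph{largest} entry smaller than it in the row above, which is precisely steps (ii)--(iii) of the proposition --- the displaced entries retrace the original bumping path backwards, so the entry ejected from the first row is the value inserted at that step and what remains is the tableau before that step. Since, by construction of $Q$, the box of $Q$ holding the largest index is the last-added box, iterating the reverse step for $n,n-1,\ldots,1$ recovers $w(n),\ldots,w(1)$; hence the two procedures are mutually inverse and $w(-,-)$ is a bijection. (The identity $\sum_\lambda(\#T(\lambda))^2=\#S_n$ recalled before the statement then follows by counting.)

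It remains to prove the symmetry $w(P,Q)=w(Q,P)^{-1}$, equivalently that applying the insertion procedure to $w^{-1}$ interchanges the insertion and recording tableaux. I would do this via Viennot's geometric ``shadow-line'' construction: plot the points $(i,w(i))$ in the plane and form the successive shadow lines issuing from the southwest; their configuration simultaneously reads off the first rows of $P$ and of $Q$, and the points surviving to the next stage define a smaller array on which one recurses. Transposing the array --- which is exactly the passage $w\mapsto w^{-1}$ --- reflects every shadow line across the diagonal and hence swaps the data defining $P$ and $Q$ at each stage, so the identity follows by induction on $n$. This symmetry step is where essentially all the content lies: it is not a formal consequence of the insertion rules, unlike the reversibility of bumping, which is pure bookkeeping, and I expect the shadow-line argument, or the equivalent matrix-ball construction, to be the main obstacle; Knuth's original double induction gives an alternative route but is less transparent.
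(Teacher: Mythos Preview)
Your proposal is correct and follows the standard route: build the forward map by Schensted row-insertion, invert it box by box via reverse bumping (which is exactly the algorithm (i)--(v) in the statement), and then establish the symmetry $w(P,Q)=w(Q,P)^{-1}$ using Viennot's shadow-line geometry. Each of these steps is sound as you have sketched them.

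The paper, however, does not prove this proposition at all: its entire proof is a citation to Knuth~\cite[5.1.4]{Knuth_vol3}. So there is no ``same approach'' to compare against---the result is treated as classical background. Your write-up is therefore strictly more than what the paper requires; if anything, the Viennot argument for the symmetry is a slightly different (and more geometric) presentation than Knuth's original double induction, which you rightly mention as an alternative. Either would suffice for the purposes of the paper, which only invokes the correspondence as a black box (chiefly to compute $w(\mathcal{T},\mathcal{T})$ for the particular tableau $\mathcal{T}$ in the proof of Theorem~\ref{thm:main1}).
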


\begin{proof}
See e.g.\ \cite[5.1.4]{Knuth_vol3}.
\end{proof}

Besides the above two ingredients, we also need the Young tableaux labelling of components of the Springer fibre $\mathcal{B}_u$ given in \cite{Spaltenstein_fixpt_uniptransform_flagmfld} and \cite{Steinberg_desingularisation_1976}. We follow Steinberg's description in \cite{Steinberg_occurrence}.

\begin{prop}[Tableaux labelling of components]\label{prop:Spal--Stei}
There is a bijection
$$T(\lambda(u)) \longleftrightarrow \left\{ \textrm{components of}\ \mathcal{B}_u \right\}.$$
More explicitly, for a given tableau $P\in T(\lambda(u))$, the corresponding component is characterised as the closure of an open subset $C(P)$, where $C(P)$ consists of the flags 
$$\mathcal{F}\colon V_0\subset V_1\subset...\subset V_n=V$$ 
constructed via the following steps:
\begin{itemize}
\item[(I)] Let $N:=u-I\in\mathfrak{gl}_n$, the nilpotent element associated with $u$;
\item[(II)] if $n$ is in the position $(i,j)=(i,r_i)=(c_j,j)$ of $P$, then $V_{n-1}$ is any hyperplane satisfying that
\begin{equation*}
\begin{cases}
NV_n+\mathrm{Ker}N^{j-1}\subseteq V_{n-1}  \\
NV_n+\mathrm{Ker}N^j\nsubseteq V_{n-1}  \\
\end{cases};
\end{equation*}
\item[(III)] once such a $V_{n-1}$ is chosen, we repeat the above process to construct a $V_{n-2}$ (by replacing $V_n,N...$ by $V_{n-1},N|_{V_{n-1}}$), and so on.
\end{itemize}
\end{prop}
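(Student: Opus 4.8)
The plan is to exhibit each set $C(P)$ as a fibre of an explicit invariant $\beta\colon\mathcal B_u\to T(\lambda(u))$ --- Steinberg's ``relative position'' labelling --- and then to analyse these fibres by induction on $n$. Write $N:=u-I$. For a flag $\mathcal F\colon V_0\subset\cdots\subset V_n$ in $\mathcal B_u$ and each $i$, the restriction $N|_{V_i}$ is nilpotent; let $\lambda^{(i)}$ be its Jordan type, a partition of $i$. The first step is the linear-algebra lemma: \emph{if $W\subseteq V$ is an $N$-stable hyperplane, then the Jordan type of $N|_W$ is obtained from that of $N|_V$ by deleting a single box, and that box lies in the column $j$ which is least with $\mathrm{Ker}\,N^{j}\nsubseteq W$.} I would prove this from the fact that $\dim(\mathrm{Ker}\,N^{k}\cap W)$ equals $\dim\mathrm{Ker}\,N^{k}$ or $\dim\mathrm{Ker}\,N^{k}-1$, together with the observation that $N$ induces an injection $(\mathrm{Ker}\,N^{k}\cap W)/(\mathrm{Ker}\,N^{k-1}\cap W)\hookrightarrow(\mathrm{Ker}\,N^{k-1}\cap W)/(\mathrm{Ker}\,N^{k-2}\cap W)$, which forces the two dimension sequences to coincide below a single threshold and differ by $1$ above it. Applied along $\mathcal F$, this shows $\emptyset=\lambda^{(0)}\subset\lambda^{(1)}\subset\cdots\subset\lambda^{(n)}=\lambda(u)$ is a chain of single-box additions, hence records a standard tableau $\beta(\mathcal F)\in T(\lambda(u))$. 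Since an $N$-stable hyperplane $W$ automatically satisfies $NV\subseteq W$ (else $N^{k}V+W=V$ for all $k$, impossible for $N$ nilpotent), while conversely $NV\subseteq W$ forces $W$ to be $N$-stable, the two conditions in step~(II) of the statement say precisely ``$V_{n-1}$ is $N$-stable and $\lambda^{(n-1)}$ is $\lambda(u)$ with its column-$j$ box removed''; unwinding the recursion~(III) then yields $C(P)=\beta^{-1}(P)$.

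Next I would prove, by induction on $n$, that each $C(P)=\beta^{-1}(P)$ is non-empty, irreducible, locally closed in $\mathcal B_u$, and of dimension $\dim\mathcal B_u$. Let the box labelled $n$ of $P$ lie in column $j$; since it is an outer corner, $c_j>c_{j+1}$. Put $A:=NV+\mathrm{Ker}\,N^{j-1}$; a direct computation gives $\dim A=n-c_j$. The set $U_P$ of hyperplanes $V_{n-1}$ satisfying the two conditions of step~(II) is the set of points of $\mathbb P\bigl((V/A)^{\vee}\bigr)$ not annihilating the image of $NV_n+\mathrm{Ker}\,N^{j}$ in $V/A$, i.e.\ the complement of a linear subspace; as $c_j>c_{j+1}$ makes that image nonzero, $U_P$ is non-empty, and it is smooth, irreducible, of dimension $c_j-1$. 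The map $C(P)\to U_P$, $\mathcal F\mapsto V_{n-1}$, has fibre over $V_{n-1}$ identified, via recursion~(III), with $C(P')$ for the nilpotent $N|_{V_{n-1}}$ on $V_{n-1}$, where $P'$ is $P$ with the box $n$ deleted and the relevant Jordan type is $\lambda(u)$ with its column-$j$ box removed; by induction this fibre is non-empty, irreducible, locally closed, of dimension $\dim\mathcal B_{u'}$ for $u':=N|_{V_{n-1}}+I$. The standard fact (following Spaltenstein) that $C(P)\to U_P$ is a Zariski-locally trivial bundle then gives that $C(P)$ is irreducible, locally closed, of dimension $(c_j-1)+\dim\mathcal B_{u'}$. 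Finally $\dim\mathcal B_u=\sum_l\binom{c_l}{2}$ (a restatement of Proposition~\ref{prop:spr property}), and $\binom{c_j-1}{2}-\binom{c_j}{2}=-(c_j-1)$, so $\dim\mathcal B_{u'}=\dim\mathcal B_u-(c_j-1)$ and $\dim C(P)=\dim\mathcal B_u$, in particular independent of $P$.

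To conclude, $\mathcal B_u=\bigsqcup_{P\in T(\lambda(u))}C(P)$ is a finite disjoint union of irreducible locally closed subsets, each of dimension $d:=\dim\mathcal B_u$, and $\mathcal B_u$ is of pure dimension $d$ by Proposition~\ref{prop:spr property}. Hence $\mathcal B_u=\bigcup_P\overline{C(P)}$ with each $\overline{C(P)}$ irreducible, closed, of dimension $d$; a $d$-dimensional irreducible closed subset of a space of pure dimension $d$ is an irreducible component, and since each component lies in one of these finitely many closed sets, the components of $\mathcal B_u$ are exactly the $\overline{C(P)}$. They are pairwise distinct: if $\overline{C(P)}=\overline{C(P')}=:Z$, then $C(P)$ and $C(P')$ are dense open in the irreducible set $Z$, hence meet, forcing $P=P'$. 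Thus $P\mapsto\overline{C(P)}$ is the asserted bijection, and $C(P)$ is by construction a dense open subset of $\overline{C(P)}$.

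I expect the main obstacle to be the local triviality of $C(P)\to U_P$ used to pass from irreducibility of the base and of the fibres to irreducibility of the total space: it requires varying the hyperplane $V_{n-1}$, the induced nilpotent $N|_{V_{n-1}}$, and Spaltenstein's construction together in a single algebraic family --- the one genuinely geometric input, as opposed to the combinatorial bookkeeping around it. The well-definedness lemma for $\beta$ (the codimension-one box-removal statement) is the other delicate point, but it is purely linear-algebraic.
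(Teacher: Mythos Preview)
The paper does not supply its own argument here: its entire proof is the single line ``See \cite[Section~2]{Steinberg_occurrence}.'' Your proposal is a faithful reconstruction of precisely that argument (originating with Spaltenstein \cite{Spaltenstein_fixpt_uniptransform_flagmfld} and reformulated by Steinberg), and the details you give are correct: the invariant $\beta$ via successive Jordan types, the box-removal lemma, the identification $C(P)=\beta^{-1}(P)$, and the inductive dimension count $\dim C(P)=(c_j-1)+\dim\mathcal B_{u'}=\dim\mathcal B_u$ all match the cited source. The one step you flag as delicate---Zariski-local triviality of $C(P)\to U_P$, needed to pass from irreducible base and fibres to irreducible total space---is indeed the only genuinely geometric point, and is handled in \cite{Spaltenstein_fixpt_uniptransform_flagmfld} by locally framing the universal hyperplane; you have correctly isolated it, and nothing further is missing.
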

\begin{proof}
See \cite[Section~2]{Steinberg_occurrence}.
\end{proof}

Moreover, they proved the following property of generic relative position for the components of $\mathcal{B}_u$:

\begin{prop}[Generic relative position]\label{prop:stei}
Let $P$ and $Q$ be two standard $\lambda(u)$-tableaux. Then there is an open dense subsvariety $X\subseteq \overline{C(P)}\times \overline{C(Q)}$ such that any closed point $(\mathcal{F}_1,\mathcal{F}_2)\in X$ has the relative position $w(P,Q)$.
\end{prop}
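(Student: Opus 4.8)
\emph{Proof proposal.} This proposition is, in essence, the main result of \cite{Steinberg_occurrence}, and the plan is to follow Steinberg's route. First I would dispose of the existence of $X$ by a soft argument: relative position defines a constructible map $\mathrm{rp}\colon G/B\times G/B\to W(T)$, so its restriction to the irreducible variety $\overline{C(P)}\times\overline{C(Q)}$ is constant on a dense open subvariety $X$. Writing $\widetilde w(P,Q)\in W(T)$ for that value, the whole content becomes the identification $\widetilde w(P,Q)=w(P,Q)$ with the Robinson--Schensted element of Proposition~\ref{prop:RS}. Before attacking it I would record two compatibilities, both as sanity checks and as an organising principle. From $\mathrm{rp}(\mathcal F_2,\mathcal F_1)=\mathrm{rp}(\mathcal F_1,\mathcal F_2)^{-1}$ one gets $\widetilde w(Q,P)=\widetilde w(P,Q)^{-1}$, matching the property $w(Q,P)=w(P,Q)^{-1}$ of Proposition~\ref{prop:RS}. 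And a dimension count on the incidence variety $Z=\{(\mathcal F_1,\mathcal F_2,N):N\ \text{nilpotent},\ N\in\mathfrak n_{\mathcal F_1}\cap\mathfrak n_{\mathcal F_2}\}$ (with $\mathfrak n_{\mathcal F}$ the nilradical of the Borel attached to $\mathcal F$) shows that $(P,Q)\mapsto\widetilde w(P,Q)$ is a bijection onto $S_n$: fibring $Z$ over the relative-position strata $\mathcal O_w\subseteq G/B\times G/B$ exhibits the irreducible components of $Z$ as the closures of the preimages $Z_w$, each of dimension $2v_G$ (the stratum $\mathcal O_w$ has dimension $v_G+l(w)$ and the fibre $\mathfrak n_{\mathcal F_1}\cap\mathfrak n_{\mathcal F_2}$ over it has dimension $v_G-l(w)$); fibring $Z$ over the nilpotent cone and invoking Propositions~\ref{prop:spr property} and \ref{prop:Spal--Stei} exhibits the same components, again of dimension $2v_G$, as the closures of the loci $\{(\mathcal F_1,\mathcal F_2,N)\in Z:N\in\mathcal O,\ \mathcal F_1\in C(P),\ \mathcal F_2\in C(Q)\}$, with $\mathcal O$ a fixed nilpotent orbit and $P,Q$ standard tableaux of its shape; since $\sum_\mu\#T(\mu)^2=\#S_n$, the two indexings of the set of components must coincide, which is the asserted bijectivity.

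To pin $\widetilde w$ down as Robinson--Schensted I would then induct on $n$, mirroring the recursion of Proposition~\ref{prop:RS}; the case $n\le1$ is trivial. Fix generic $(\mathcal F_1,\mathcal F_2)=(V_\bullet,V'_\bullet)$ in $\overline{C(P)}\times\overline{C(Q)}$; then $V_{n-1}\ne V'_{n-1}$, and putting $m:=\widetilde w(P,Q)(n)$ one has $V_i\subseteq V'_{n-1}$ exactly for $i\le m-1$. Let $\widetilde{\mathcal F}_1$ be the complete flag of $V'_{n-1}$ cut out by intersecting $\mathcal F_1$ with $V'_{n-1}$ and discarding the single repeated term, and let $\mathcal F_2^-\colon V'_0\subset\cdots\subset V'_{n-1}$ be the truncation of $\mathcal F_2$. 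Both flags are stable under $N':=N|_{V'_{n-1}}$, whose Jordan type is $\lambda(u)$ with the corner box occupied by $n$ in $Q$ deleted --- which is exactly what the column conditions of Proposition~\ref{prop:Spal--Stei} arrange, so by that proposition $\mathcal F_2^-$ lies generically in $\overline{C(Q')}$, where $Q'$ is $Q$ with that box removed. A direct computation from Definition~\ref{defi:relative posi} then shows that $\mathrm{rp}(\widetilde{\mathcal F}_1,\mathcal F_2^-)$ is obtained from $\widetilde w(P,Q)$ by deleting the entry $m$ from the one-line notation and restandardising --- the relative-position incarnation of the passage from $(P,Q)$ to $(P',Q')$ in steps~(i)--(iv) of Proposition~\ref{prop:RS}, on which step~(v) recurses. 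Provided one also knows that $\widetilde{\mathcal F}_1$ lies generically in $\overline{C(P')}$, with $P'$ the tableau produced from $P$ by the reverse row-insertion of steps~(ii)--(iv), and that $m$ is the entry $w(P,Q)(n)$ ejected from the first row there, the inductive hypothesis gives $\mathrm{rp}(\widetilde{\mathcal F}_1,\mathcal F_2^-)=w(P',Q')$, and reversing the deletion bookkeeping yields $\widetilde w(P,Q)=w(P,Q)$, completing the induction.

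The step I expect to be the real obstacle is precisely the ``provided one also knows'' clause: that intersecting a generic flag of $\overline{C(P)}$ with the generic $N'$-stable hyperplane coming from $\overline{C(Q)}$ lands in the Springer component labelled by the \emph{reverse row-inserted} tableau $P'$, and leaves that hyperplane exactly at the row-insertion output. This is where the geometry and the combinatorics have to be reconciled: one must run Steinberg's chain-of-Jordan-types description of $C(P)$ (Proposition~\ref{prop:Spal--Stei}) for the intersected flag, keep track of which column each successive Jordan box is deleted from, and recognise the resulting pattern as row insertion. The remaining ingredients --- constructibility, the dimension bookkeeping for $Z$, and the computation of $\mathrm{rp}(\widetilde{\mathcal F}_1,\mathcal F_2^-)$ --- are routine.
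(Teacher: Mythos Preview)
Your proposal is correct and is essentially the same approach as the paper's: the paper simply cites \cite[Section~3]{Steinberg_occurrence} and \cite[II.9]{Spaltenstein_1982_Borel_bk}, and what you have written is a faithful outline of Steinberg's original argument there, including an honest flag of where the main technical content lies (the identification of the intersected flag's component with the reverse row-inserted tableau $P'$).
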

\begin{proof}
See \cite[Section~3]{Steinberg_occurrence} or \cite[II.9]{Spaltenstein_1982_Borel_bk}.
\end{proof}

\section{Distribution of components}\label{sec:components}

Our main theorem is:

\begin{thm}\label{thm:main1}
Let  $\mathcal{U}$ be the variety of unipotent elements of $G$. Then we have:
\begin{itemize}
\item[(a)] There is a canonical map 
$$\beta \colon \mathcal{U}^F\longrightarrow W(T)$$ 
such that, for $u\in\mathcal{U}^F$, the Springer fibre $\mathcal{B}_u$ intersects exactly one component of the Deligne--Lusztig variety $X_{\beta(u)}$, and this component is an open dense subset of an irreducible component of $\mathcal{B}_u$.
\item[(b)] Conversely, each component of $X_w$, where $w\in W(T)=S_n$ is an involution of the shape $w=[12...][...]...[...n]$ (the block $[...]$ means the reversion along the word), is a dense open subset of a component of some Springer fibre.
\end{itemize}
\end{thm}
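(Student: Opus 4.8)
The plan is to build the map $\beta$ explicitly from the Weyr normal form and the Robinson--Schensted correspondence, and then to read off both assertions from Steinberg's generic-relative-position property. Given $u\in\mathcal{U}^F$, first I would replace $u$ by its Weyr normal form $W(u)$; by Proposition~\ref{prop:weyr} this does not change the conjugacy class, and the point of the Weyr form (as the remark after Definition~\ref{defi:Weyr} stresses) is that the block sizes $c_0=0\leq c_1,\dots$ run over the \emph{columns} of $\lambda(u)$, which is exactly the grading that appears in Steinberg's description of $C(P)$ in Proposition~\ref{prop:Spal--Stei}, step (II). Using this, I would identify a distinguished standard tableau $P_0\in T(\lambda(u))$ --- the ``column-reading'' tableau that fills $\lambda(u)$ column by column --- and show that the flag built from the standard basis according to the Weyr block structure lies in the open cell $C(P_0)$; in fact the standard flag should be $F$-fixed (since $u$ is rational and we may take $W(u)$ defined over $\mathbb{F}_q$), so it also witnesses a point of $\mathcal{B}_u$. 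The definition of $\beta$ is then $\beta(u):=w(P_0,P_0)$, the RS image of the pair $(P_0,P_0)$; by Proposition~\ref{prop:RS} this is an involution, and since $P_0$ is the column-reading tableau one checks directly that $w(P_0,P_0)$ has precisely the block form $[12\dots][\dots]\cdots[\dots n]$ dictated by the column lengths $c_i$, which is the shape appearing in part~(b).

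For part~(a), the key computation is $l(\beta(u)) = \dim X_{\beta(u)} = \dim \mathcal{B}_u$. The left equality is Proposition~\ref{prop:DL property}; for the right one I would compute $l(w(P_0,P_0))$ combinatorially from the columns $c_i$ and compare with $v_G-\tfrac12\dim C(u)$ from Proposition~\ref{prop:spr property}, using the classical formula $\dim C(u) = n^2 - \sum_i c_i^2$ for the centraliser --- indeed this is exactly the dimension identity that Corollary~\ref{coro:dim of unip centraliser} promises to reprove, so I would isolate it as a lemma. Next, apply Proposition~\ref{prop:stei} with $P=Q=P_0$: there is an open dense $X\subseteq \overline{C(P_0)}\times\overline{C(P_0)}$ all of whose points are in relative position $w(P_0,P_0)=\beta(u)$. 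Intersecting with the diagonal-type locus coming from the Frobenius --- i.e. restricting to pairs $(\mathcal{F},F\mathcal{F})$ --- shows that a dense open subset of the component $\overline{C(P_0)}$ of $\mathcal{B}_u$ consists of flags whose relative position to their Frobenius twist is $\beta(u)$; by condition~\textbf{(*)} these flags lie in $X_{\beta(u)}$, giving $C(P_0)\cap\mathcal{B}_u$ open dense in $\overline{C(P_0)}$ and contained in $X_{\beta(u)}$. Since this subset already has dimension $l(\beta(u))=\dim X_{\beta(u)}$ and $X_{\beta(u)}$ is of pure dimension $l(\beta(u))$, it is open dense in a single component of $X_{\beta(u)}$; that $\mathcal{B}_u$ meets \emph{no other} component of $X_{\beta(u)}$ follows because any other component would have to meet some $\overline{C(P)}$ with $P\neq P_0$, where the generic relative position is $w(P,P)\neq\beta(u)$ by injectivity of RS (Proposition~\ref{prop:RS}), so a dimension count rules out an intersection of the right dimension, and a closure/irreducibility argument rules out smaller intersections.

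For part~(b), given an involution $w$ of the stated block shape, its blocks have sizes $c_1\geq c_2\geq\cdots$ (after possibly reordering so the shape is a genuine partition; the theorem's normalization $r_1\geq r_{i+1}$ in Notation~\ref{notation:unip notations} makes this canonical), and these define a Young diagram $\lambda$; let $u$ be the unipotent with $\lambda(u)=\lambda$. One checks $w = w(P_0,P_0) = \beta(u)$ for the column-reading tableau $P_0$, so part~(b) is formally the surjectivity of $\beta$ onto involutions of this shape, and each component of $X_w$ is then handled by the same argument as in~(a): the unique component of $X_w$ meeting $\mathcal{B}_u$ is open dense in $\overline{C(P_0)}$, and one argues that \emph{every} component of $X_w$ arises this way --- here I would either invoke that components of $X_w$ for an involution are themselves indexed in a way matching $T(\lambda)$, or more robustly observe that $X_w=\bigsqcup$ of pieces each landing in some $\overline{C(P)}$ for the \emph{same} $\lambda$ but varying $P$ (all sharing the column content forced by $w$), and that Steinberg's generic position $w(P,P)$ equals $w$ precisely when $P$ has that column content.

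The main obstacle I expect is the bookkeeping in the middle of part~(a): verifying cleanly that the standard (Weyr) flag lands in $C(P_0)$ for the \emph{right} $P_0$ --- i.e. matching step~(II) of Proposition~\ref{prop:Spal--Stei}, which tracks $\mathrm{Ker}\,N^j$, against the Weyr block decomposition, where the $j$-th ``layer'' has dimension $c_j$ --- and, relatedly, pinning down that the \emph{only} component of $X_{\beta(u)}$ (resp. $X_w$) that $\mathcal{B}_u$ meets is the one through $C(P_0)$. The uniqueness claim is the subtle half: it needs that distinct tableaux $P$ give genuinely different generic relative positions $w(P,P)$ (immediate from RS injectivity) \emph{and} that an intersection $\mathcal{B}_u\cap(\text{component of }X_{\beta(u)})$ of dimension strictly less than $l(\beta(u))$ cannot occur inside a component already accounted for --- this likely requires the irreducibility of $\overline{C(P_0)}$ together with the fact that $X_{\beta(u)}$ has pure dimension $l(\beta(u))$, so any component it shares a dense open subset of $\overline{C(P_0)}$ with is determined. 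I would set up the combinatorial lemma on $l(w(P_0,P_0))$ and $\dim C(u)$ first, since it is reused in both parts and furnishes Corollary~\ref{coro:dim of unip centraliser} for free.
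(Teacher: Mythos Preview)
Your setup matches the paper's: the same column-reading tableau $\mathcal{T}=P_0$, the same $\beta(u)=w(\mathcal{T},\mathcal{T})$, and the same use of Proposition~\ref{prop:stei} with the Frobenius graph $(\mathrm{Id},F)$ to extract an open subset of $\overline{C(\mathcal{T})}$ inside $X_{\beta(u)}$. But there is a missing structural ingredient, and without it two of your steps do not go through. The paper writes down an explicit component decomposition
\[
X_{\beta(u)}\ \cong\ \bigsqcup_{(G/P)^F}\; X_{\overline{R}_1}\times\cdots\times X_{\overline{R}_{r_1}},
\]
where $P$ is the standard parabolic of block type $(c_1,\dots,c_{r_1})$ and each $\overline{R}_i$ is the longest element in $\mathrm{GL}_{c_i}$ (so each factor is irreducible). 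This decomposition, together with a direct check of Steinberg's kernel conditions (II) against the Weyr matrix, shows $C_1\subseteq C(\mathcal{T})$ and hence furnishes a non-empty piece of $C(\mathcal{T})\cap X_{\beta(u)}$; only then does the paper invoke Proposition~\ref{prop:stei} to get openness. In your argument you need $(\mathrm{Id},F)^{-1}(X)\neq\varnothing$ before you can say it is dense, but the Frobenius graph has positive codimension and could a priori lie in the complement of $X$; the $F$-fixed standard flag you exhibit sits in $X_1$, not in $X_{\beta(u)}$, so it does not witness this. Relatedly, you want to feed in the centraliser dimension formula of Corollary~\ref{coro:dim of unip centraliser} as a lemma, whereas in the paper the equality $\dim\mathcal{B}_u=l(\beta(u))$ is an \emph{output} read off from the decomposition above.

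The uniqueness argument is the more serious gap. Saying that for $P\neq P_0$ the generic relative position on $\overline{C(P)}$ is $w(P,P)\neq\beta(u)$ only tells you that a dense open part of $\overline{C(P)}$ avoids $X_{\beta(u)}$; it does not rule out a component of $X_{\beta(u)}$ meeting $\mathcal{B}_u$ in a lower-dimensional locus (inside some $\overline{C(P)}$ or on the boundary of $\overline{C(P_0)}$), and your ``closure/irreducibility'' clause does not close this. The paper's proof is entirely different here: using the Weyr structure and the flag conditions for $X_{\beta(u)}$, it shows via an internal direct-sum argument (from $V_{c_1}=V_z\oplus FV_{c_1-z}$ and $F$-stability of $N$ one forces $NV_{c_1}=0$, hence $V_{c_1}=\mathrm{Ker}\,N=\langle e_1,\dots,e_{c_1}\rangle$, then inducts on the quotients) that \emph{any} $u$-stable flag in $X_{\beta(u)}$ already lies in the single component $C_1$. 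The same decomposition gives that $G^F$ permutes the components of $X_w$ transitively, which is how the paper handles part~(b): every component is $gC_1$ for some $g\in G^F$, hence open dense in a component of $\mathcal{B}_{{}^g W(u)}$. Your proposed alternative for (b), that components of $X_w$ are naturally indexed by tableaux of shape $\lambda$ with the same column content, is not correct as stated.
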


\begin{proof}
{\bf Proof of (a).}

\vspace{2mm} We first prove (a) for the Weyr form $W(u)$ of $u$, in a constructible manner. 

\vspace{2mm} Let us construct a special $\lambda(u)$-tableau $\mathcal{T}$ which will be critical for us: This is done by filling $\{1,...,n\}$ into $\lambda(u)$ in the way that, first fill the 1st column of $\lambda(u)$ from up to down, and then the 2nd column of $\lambda(u)$ from up to down, and so on. So we have 
\begin{equation}\label{formula:tableau T}
{\textrm{the $i$-th column of}\ \mathcal{T}}=
\begin{cases}
\left(\sum_{j<i} c_j\right)+1 \\
\left(\sum_{j<i} c_j\right)+2 \\
...\\
...\\
\left(\sum_{j<i} c_j\right)+c_i
\end{cases}.
\end{equation}
(Recall that we have made the convention $c_0:=0$.)

\vspace{2mm} By the Robinson--Schensted correspondence (Proposition~\ref{prop:RS}) this gives an involution 
\begin{equation}\label{formula:beta(u)}
\beta(u):=w(\mathcal{T},\mathcal{T})=\prod_{i=1}^{r_1}R_i,
\end{equation}
where $R_i$ is the reversion along the $i$-th column of $\mathcal{T}$:
\begin{equation*}
R_i=\begin{pmatrix}
\left(\sum_{j<i}c_j\right)+1    & \left(\sum_{j<i}c_j\right)+2    & \cdots &  \left(\sum_{j<i}c_j\right)+c_i \\
\left(\sum_{j<i}c_j\right)+c_i & \left(\sum_{j<i}c_j\right)+c_i-1 & \cdots & \left(\sum_{j<i}c_j\right)+1
\end{pmatrix}.
\end{equation*}

Now consider the corresponding Deligne--Lusztig variety $X_{\beta(u)}=X_{w(\mathcal{T},\mathcal{T})}$; we want to compute it using flags via \hyperlink{condition (*)}{{\bf (*)}}. Let us fix a set $\{ e_1,...,e_n\}$ as a standard basis of $V$ (over $\mathbb{F}_q$), and view $B$ (resp.\ $T$) as the standard upper triangular subgroup (resp.\ diagonal maximal torus) with respect to this basis. Then the Weyl group $W(T)$ is identified as the symmetric group $S_n$ permuting the subscripts $\{ 1,...,n \}$ of the basis and the simple reflections are identified as the transpositions $(i,i+1)$. Then by \hyperlink{condition (*)}{{\bf (*)}}, a complete flag 
$$\mathcal{F}\colon V_0\subseteq ... \subseteq V_n=V\in G/B$$
lies in $X_{w(\mathcal{T},\mathcal{T})}$ if and only if the following two conditions hold: 
\begin{itemize}
\item[(i)] For each $c_i$, one has 
$$FV_{c_0+c_1+...+c_i}=V_{c_0+c_1+...+c_i}$$
(i.e.\ the spaces $V_{c_0+...+c_i}$ are $F$-stable);
\item[(ii)] for any $s,t$ in the interval $(c_0+...+c_{i-1},c_0+...+c_{i})$, one has
$$\dim V_{s}\cap FV_{t}=\# \left( \{1,...,s\}\cap \left\{w(\mathcal{T},\mathcal{T})(1),...,w(\mathcal{T},\mathcal{T})(t)\right\} \right).$$
\end{itemize}
For each $R_i$, let $\overline{R}_i$ be the corresponding reversion ``modulo $\sum_{j<i}c_j$'', namely 
\begin{equation*}
\overline{R}_i=\begin{pmatrix}
1    & 2    & \cdots &  c_i \\
c_i & c_i-1 & \cdots & 1
\end{pmatrix}.
\end{equation*}
Then the condition (ii) can be re-written as:
\begin{itemize}
\item[(ii)'] For any $s,t$ in the interval $(c_0+...+c_{i-1},c_0+...+c_{i})$, one has
$$\dim \overline{V}_{s}\cap F\overline{V}_{t}=\# \left( \left\{1, 2 , ..., s-\sum_{j<i}c_j \right\}\bigcap  \left\{ \overline{R}_{i}(1), \overline{R}_{i}(2), ..., \overline{R}_{i}\left( t-\sum_{j<i}c_j \right) \right\} \right),$$
where $\overline{V}_{s}:=V_s/V_{c_0+...+c_{i-1}}$ and $\overline{V}_t:=V_{t}/V_{c_0+...+c_{i-1}}$.
\end{itemize}

\vspace{2mm} Note that, considering in the quotient space $V_{c_0+...+c_{i}}/V_{c_0+...+c_{i-1}}$, the condition (ii)' can be viewed as the flag condition for the Deligne--Lusztig variety $X_{\overline{R}_i}$ of 
$$G(i):=\mathrm{GL}(V_{c_0+...+c_{i}}/V_{c_0+...+c_{i-1}})$$
at $\overline{R}_i$. Therefore, the conditions (i) and (ii)' tell us that, to construct a flag $\mathcal{F}\colon V_0\subseteq ... \subseteq V_n$ in $X_{w(\mathcal{T},\mathcal{T})}$ is the same as to do the following two steps:
\begin{itemize}
\item[(iii)] Construct an $F$-stable partial flag 
$$\{0\}=V_{c_0}\subseteq V_{c_0+c_1}\subseteq V_{c_0+c_1+c_2} \subseteq ...\subseteq V_{c_0+...+c_{r_1}}=V$$ 
of spaces of dimensions $c_0,c_0+c_1,...,c_0+...+c_{r_1}=n$, which is the same as to choose a point in $(G/P)^F=G^F/P^F$, where $P$ is the standard parabolic subgroup fixing the partial flag
$$\langle e_1, ..., e_{c_1} \rangle \subseteq \langle e_1, ..., e_{c_1+c_2} \rangle \subseteq ... \subseteq \langle e_1, ..., e_{c_1+...+c_{r_1}}=e_n \rangle=V;$$
\item[(iv)] for each quotient space $V_{c_0+...+c_{i}}/V_{c_0+...+c_{i-1}}$, take a point in the Deligne--Lusztig variety $X_{\overline{R}_i}$.
\end{itemize}

\vspace{2mm} Therefore we get a decomposition
\begin{equation}\label{formula:component decomposition}
X_{w(\mathcal{T},\mathcal{T})}\cong \bigsqcup_{(G/P)^F} X_{\overline{R}_1}\times ... \times X_{\overline{R}_{r_1}}
\end{equation}
into closed subvarieties. (To fix a partial part of a complete flag in the form given in the step (iii) is the same as to require a Borel subgroup to be inside a fixed parabolic subgroup, hence constitute a closed condition.) Note that $\overline{R}_i$ corresponds to the longest element of the Weyl group of  $G(i)$, so $X_{\overline{R}_i}$ is an open subvariety of the flag variety of $G(i)$, hence irreducible. Thus the disjoint union \eqref{formula:component decomposition} is actually the component decomposition of $X_{w(\mathcal{T},\mathcal{T})}$, and $G^F$ acts transitively on the components by translating the partial flag corresponding to $P$.

\vspace{2mm} Denote by $C_1$ the component of $X_{w(\mathcal{T},\mathcal{T})}$ corresponding to $1\cdot P^F\in G^F/P^F$. By construction $C_1$ consists of the flags $\mathcal{F}\colon V_0\subseteq ... \subseteq V_n$ satisfying that
\begin{itemize}
\item[(v)] $V_{c_0+...+c_i}=\langle e_1, ..., e_{c_0+...+c_i} \rangle$ for all $i$;
\item[(vi)] $=$(ii).
\end{itemize} 
We shall show that $C_1\subseteq C(\mathcal{T})$, where the latter is an open subset of the component of $\mathcal{B}_{W(u)}$ corresponding to $\mathcal{T}$ (see Proposition~\ref{prop:Spal--Stei}). Consider the nilpotent element $N=W(u)-I$, which is a blocked matrix of shape
\begin{equation*}
\begin{bmatrix}
0 & \left(\begin{smallmatrix} I\\ 0 \end{smallmatrix}\right) &  0 & 0 & ... & 0\\
0 & 0 & \left(\begin{smallmatrix} I\\ 0 \end{smallmatrix}\right) & 0 & ... & 0\\
0 & 0 & 0 & \left(\begin{smallmatrix} I\\ 0 \end{smallmatrix}\right) & ... & 0\\
... & ... & ...  & ... & ... & ... \\
0 & ... & 0  & 0 & 0 & \left(\begin{smallmatrix} I\\ 0 \end{smallmatrix}\right) \\
0 & 0 & ... & 0 & 0 & 0\\
\end{bmatrix},
\end{equation*}
where the $I$'s are the identity matrices of sizes $c_2, c_3, ..., c_{r_1}$ and the $0$'s are the zero matrices of possibly different suitable sizes. Note that 
$$NV_{c_0+...+c_i}\subseteq V_{c_0+...+c_{i-1}}$$
for any flag $\mathcal{F}\colon V_0\subseteq ... \subseteq V_n$ in $C_1$. Thus by Proposition~\ref{prop:Spal--Stei}, to show that $C_1\subseteq C(\mathcal{T})$, it is sufficient to show that every $\mathcal{F}\in C_1$ satisfies:
\begin{itemize}
\item[(vii)] If $s$ is in the interval $(c_0+...+c_{i-1},c_0+...+c_{i})$, then 
\begin{equation*}
\begin{cases}
\mathrm{Ker}N^{i-1}|_{V_{s+1}}\subseteq V_s  \\
\mathrm{Ker}N^{i}|_{V_{s+1}}\nsubseteq V_s  \\
\end{cases};
\end{equation*}
\item[(viii)] if $s=c_0+...+c_{i}$ (with $i<r_1$), then 
\begin{equation*}
\begin{cases}
\mathrm{Ker}N^{i}|_{V_{s+1}}\subseteq V_s   \\
\mathrm{Ker}N^{i+1}|_{V_{s+1}}\nsubseteq V_s  \\
\end{cases}.
\end{equation*}
\end{itemize} 
Actually this is clear, because a direct blocked-matrix computation gives that
$$\mathrm{Ker}N^i=V_{c_0+...+c_i}$$
for $\forall i\in\{ 1,...,r_1 \}$. So $C_1\subseteq C(\mathcal{T})$.

\vspace{2mm} Next we show that $\mathcal{B}_{W(u)}$ does not intersect any other component of $X_{w(\mathcal{T},\mathcal{T})}$, namely,
$$C_1=\mathcal{B}_{W(u),w(\mathcal{T},\mathcal{T})}.$$
By the decomposition~\eqref{formula:component decomposition}, it is sufficient to show that, if a flag $\mathcal{F}\colon V_0\subseteq ... \subseteq V_n=V$ is stabilised by $u$ (or equivalently, $NV_{i+1}\subseteq V_i$, $\forall i$) and satisfies the condition (ii), then it automatically satisfies the condition (v), i.e.\  $V_{c_0+...+c_i}=\langle e_1, ..., e_{c_0+...+c_i} \rangle$ for all $i$. We first prove this for $V_{c_1}$. Note that, for any $z\in\{ 1,...,c_1-1 \}$, the condition (ii) implies that there is an internal direct sum decomposition
\begin{equation}\label{formula:internal direct sum V_c1}
V_{c_1}=V_z\oplus FV_{c_1-z},
\end{equation}
which gives an internal direct sum decomposition of $V_{z+1}$
\begin{equation*}
V_{z+1}=V_z\oplus F\langle v'\rangle,
\end{equation*}
where $v'$ is some non-zero vector in $V_{c_1-z}$. So by $NV_{z+1}\subseteq V_z$ we get
\begin{equation}\label{formula:internal direct sum V_z}
NV_{z+1}=NV_{z}+ NF\langle v'\rangle \subseteq  V_z.
\end{equation}
Since $N$ is $F$-stable, \eqref{formula:internal direct sum V_z} implies that 
$$FNv'=NFv'\in V_z;$$ 
however, by the internal decomposition \eqref{formula:internal direct sum V_c1}, this happens if and only if $Nv'=0$. Thus from \eqref{formula:internal direct sum V_z} we see that $NV_{z+1}=NV_z$ for any $z\in\{ 1,..., c_1-1\}$. Therefore
$$NV_{c_1}=NV_{c_1-1}=...=NV_1=0,$$
which gives that $V_{c_1}=\langle e_1, e_2,  ..., e_{c_1} \rangle$. Now, suppose $V_{c_0+...+c_i}=\langle e_1, ..., e_{c_0+...+c_i} \rangle$ for some $i$, then by applying the above argument to the quotient space $V_{c_0+...+c_{i+1}}/V_{c_0+...+c_i}$ we also get $V_{c_0+...+c_{i+1}}=\langle e_1, ..., e_{c_0+...+c_{i+1}} \rangle$. So by induction the condition (v) holds, and we conclude that $\mathcal{B}_{W(u)}$ intersects $X_{w(\mathcal{T},\mathcal{T})}$ at exactly the component $C_1$, or more precisely,
\begin{equation}\label{formula:single component}
C_1=\mathcal{B}_{W(u),w(\mathcal{T},\mathcal{T})}=C(\mathcal{T})\cap X_{w(\mathcal{T},\mathcal{T})}.
\end{equation}

\vspace{2mm} It remains to discuss the openness of $C_1$ in the component $\overline{C(\mathcal{T})}$. First note that, since our $N$ is $F$-stable, the step (II) in Proposition~\ref{prop:Spal--Stei} implies that 
$$F {C(\mathcal{T})}={C(\mathcal{T})}.$$ 
So there is a graph embedding
$$(\mathrm{Id}, F)\colon {C(\mathcal{T})} \longrightarrow {C(\mathcal{T})}\times {C(\mathcal{T})}\subseteq G/B\times G/B.$$
It is well-known that (see e.g.\ \cite[7.7]{Carter1993FiGrLieTy}) the subsets $D_v\subseteq G/B\times G/B$ of pairs at various relative positions $v$ give a finite stratification of $G/B\times G/B$ into locally closed subvarieties, so their intersections with ${C(\mathcal{T})}\times {C(\mathcal{T})}$ also give such a stratification of ${C(\mathcal{T})}\times {C(\mathcal{T})}$; let us denote the unique dense open strata of ${C(\mathcal{T})}\times {C(\mathcal{T})}$ by $X$. Then Proposition~\ref{prop:stei} implies that every pair in $X$ is in the relative position $w(\mathcal{T},\mathcal{T})$. In particular we see that
$$(\mathrm{Id}, F)^{-1}(X)={C(\mathcal{T})}\cap X_{w(\mathcal{T},\mathcal{T})},$$
and that this is an open subvariety of $C(\mathcal{T})$ (and hence of $\overline{C(\mathcal{T})}$). So, as $X_{w(\mathcal{T},\mathcal{T})}$ is pure dimensional (see Proposition~\ref{prop:DL property}), by \eqref{formula:single component} we conclude that $C_1$ is a dense open subvariety of $\overline{C(\mathcal{T})}$ as desired.

\vspace{2mm} The above proves (a) for $W(u)$. For a general unipotent $u\in G^F=\mathrm{GL}_n(\mathbb{F}_q)$, by Proposition~\ref{prop:weyr} we know that
$$u^g=W(u),$$
for some $g\in G$. Meanwhile, as both $u$ and $W(u)$ are in $G^F$, by the fact that two matrices similar over a field extension are similar over the original field, we can take $g\in G^F$. (Another way to see this is to apply the Lang--Steinberg theorem, by noting that centralisers in $\mathrm{GL}_{n/\overline{\mathbb{F}}_q}$ are always connected.) Thus $gC_1$ is still a component of $X_{w(\mathcal{T},\mathcal{T})}$ and is also an open dense subset of the component 
$$g\overline{C(\mathcal{T})}\subseteq \mathcal{B}_{{^gW(u)}}=\mathcal{B}_{u}.$$
The uniqueness of the component is also clear by taking a conjugation. This completes the proof of (a).

\vspace{2mm} {\bf Proof of (b).}

\vspace{2mm} For such an involution $w\in W(T)$, by Proposition~\ref{prop:RS} we see that  $w(\mathcal{T},\mathcal{T})=w$ for some tableau $\mathcal{T}$ of the form specified in \eqref{formula:tableau T}. Let $u$ be any unipotent element making $\mathcal{T}$ a $\lambda(u)$-tableau. Then, as we did in the proof of (a), there is a component $C_1$ of $X_w$ lying as an open dense subvariety in the component $\overline{C(\mathcal{T})}$ of $\mathcal{B}_{W(u)}$. 

\vspace{2mm} Now, from the discussion of $X_{w(\mathcal{T},\mathcal{T})}$ given in (a) we know that the translation action of $G^F$ on $X_w$ is transitive on the components; in particular, each component of $X_w$ is of the form $gC_1$ for some $g\in G^F$, which is then an open dense subset of some component of $\mathcal{B}_{{^gW(u)}}$. This completes the proof.
\end{proof}

Recall the dimension formula:

\begin{coro}\label{coro:dim of unip centraliser}
Let $u$ be a unipotent element, then
$$\dim Z_G(u)=\sum_i c_i^2,$$
where $c_i$ is the number of boxes in the $i$-th column of the Young diagram of $u$.
\end{coro}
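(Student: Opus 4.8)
The plan is to extract $\dim Z_G(u)$ from the dimension of the Springer fibre $\mathcal{B}_u$, and to compute that dimension using Theorem~\ref{thm:main1}(a), which realises an open dense subset of a component of $\mathcal{B}_u$ as a component of the Deligne--Lusztig variety $X_{\beta(u)}$, whose dimension is known exactly. Since both sides of the asserted formula depend only on the Jordan type of $u$, which is constant along the conjugacy class, and since every unipotent class of $\mathrm{GL}_n$ meets $G^F$ (the Jordan normal form has entries in $\mathbb{F}_q$), I may assume $u\in\mathcal{U}^F$.

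\textbf{Step 1: reduce to a length computation.} For $G=\mathrm{GL}_n$ one has $\dim G=n^2$, $v_G=\binom{n}{2}=\tfrac12(n^2-n)$, and $\dim C(u)=\dim G-\dim Z_G(u)=n^2-\dim Z_G(u)$ since $C(u)\cong G/Z_G(u)$. Substituting into Proposition~\ref{prop:spr property} gives
$$\dim\mathcal{B}_u=v_G-\tfrac12\dim C(u)=\tfrac12\bigl(\dim Z_G(u)-n\bigr),$$
equivalently $\dim Z_G(u)=2\dim\mathcal{B}_u+n$. So it suffices to prove $\dim\mathcal{B}_u=\tfrac12\bigl(\sum_i c_i^2-n\bigr)$.

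\textbf{Step 2: identify $\dim\mathcal{B}_u$ with $l(\beta(u))$.} By Theorem~\ref{thm:main1}(a) there is a component of $\mathcal{B}_u$ containing a component $C_1$ of $X_{\beta(u)}$ as a dense open subset, so that component of $\mathcal{B}_u$ has dimension $\dim C_1$. By Proposition~\ref{prop:DL property} the variety $X_{\beta(u)}$ is of pure dimension $l(\beta(u))$, hence $\dim C_1=l(\beta(u))$; and since $\mathcal{B}_u$ is of pure dimension (Proposition~\ref{prop:spr property}), all of its components, hence $\mathcal{B}_u$ itself, have dimension $l(\beta(u))$.

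\textbf{Step 3: compute $l(\beta(u))$.} By \eqref{formula:beta(u)}, $\beta(u)=\prod_{i=1}^{r_1}R_i$, where $R_i$ acts as order-reversal on the block of $c_i$ consecutive integers $\{\sum_{j<i}c_j+1,\dots,\sum_{j\le i}c_j\}$ and fixes everything else. These blocks are pairwise disjoint consecutive intervals, so no inversion of $\beta(u)$ can involve two different blocks; hence $l(\beta(u))=\sum_{i=1}^{r_1}l(R_i)$. Each $R_i$ is the longest element of the symmetric group $S_{c_i}$ on its block, so $l(R_i)=\binom{c_i}{2}=\tfrac12(c_i^2-c_i)$. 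Using $\sum_i c_i=n$ (the total number of boxes of $\lambda(u)$) we obtain $l(\beta(u))=\tfrac12\sum_i(c_i^2-c_i)=\tfrac12\bigl(\sum_i c_i^2-n\bigr)$, which combined with Step~1 yields $\dim Z_G(u)=\sum_i c_i^2$. I do not expect a genuine obstacle here: once Theorem~\ref{thm:main1} is in hand the corollary is a matching of dimension counts, the only substantive inputs being the additivity of Coxeter length over the disjoint-block factorisation of $\beta(u)$ and the evaluation $l(R_i)=\binom{c_i}{2}$, both elementary. One could even bypass Step~3 by reading the dimension of $X_{\beta(u)}$ straight off \eqref{formula:component decomposition}, which presents it as fibred over $(G/P)^F$ with fibre $\prod_i X_{\overline{R}_i}$, each $X_{\overline{R}_i}$ being an open subset of a flag variety of dimension $\binom{c_i}{2}$.
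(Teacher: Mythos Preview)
Your proof is correct and follows essentially the same route as the paper: equate $\dim\mathcal{B}_u$ with $l(\beta(u))$ via Theorem~\ref{thm:main1}, then evaluate $l(\beta(u))=\sum_i\binom{c_i}{2}$ and solve for $\dim Z_G(u)$. The only cosmetic difference is that the paper reads off $l(w(\mathcal{T},\mathcal{T}))=\sum_i l(\overline{R}_i)$ directly from the component decomposition~\eqref{formula:component decomposition} rather than counting inversions, which is exactly the alternative you note at the end of Step~3.
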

This can be proved by a matrix manipulation together with a combinatorial consideration (see  \cite[IV.1]{Springer--Steinberg_1970_Conj} and \cite[1.2 and 1.3]{Humphreys_1995_Conj_ss_algGp}); here we derive a geometric proof from the argument of Theorem~\ref{thm:main1}.
\begin{proof}
By Proposition~\ref{prop:weyr} we can assume that $u=W(u)$. Then from the proof of Theorem~\ref{thm:main1} we see that
$$\dim \mathcal{B}_u=\dim X_{w(\mathcal{T},\mathcal{T})}.$$
By Proposition~\ref{prop:DL property} and Proposition~\ref{prop:spr property} this equality can be written as
$$\frac{n(n-1)}{2}-\frac{1}{2}\dim C(u)=l(w(\mathcal{T},\mathcal{T})),$$
where $C(u)$ denotes the conjugacy class of $u$. Note that, by the component decomposition in the argument of Theorem~\ref{thm:main1} (see \eqref{formula:component decomposition}), we have
$$l(w(\mathcal{T},\mathcal{T}))=\sum_il(\overline{R}_i)=\sum_i\frac{c_i(c_i-1)}{2}.$$
So
$$\dim Z_G(u)=\dim G-\dim C(u)=n+\sum_ic_i(c_i-1)=\sum_ic_i^2,$$
as desired.
\end{proof}

\section{Examples and remarks}\label{sec:further remarks}

In this section we give some examples and remarks related to our main theorem, and give a short discussion on the representations associated with $\mathcal{B}_{u,w}=\mathcal{B}_u\cap X_w$ for rectangular $u$.

\vspace{2mm} 1. 

\vspace{2mm} One may wonder that, in Theorem~\ref{thm:main1}, does (a) applies to an arbitrary component of $\mathcal{B}_u$, or does (b) applies to an arbitrary involution of $W(T)$? As illustrated below, without weakening the assertions usually the answer is no:

\begin{expl}\label{expl: illustrating expl}
Let $\mathbb{G}=\mathrm{GL}_4$ and let
\begin{equation*}
u=W(u)=
\begin{bmatrix}
1 & 0 & 1 & 0   \\
0 & 1 & 0 & 1   \\
0 & 0 & 1 & 0  \\
0 & 0 & 0 & 1
\end{bmatrix}\in \mathrm{GL}_4({\mathbb{F}}_q).
\end{equation*}
By Proposition~\ref{prop:spr property} and Proposition~\ref{prop:Spal--Stei}, the variety $\mathcal{B}_{u}$ has two irreducible components, both of dimension $2$, indexed by the Young tableaux
\begin{equation*}
P:=\begin{ytableau}
1 & 3\\
2 & 4
\end{ytableau}
\quad\textrm{and}\quad
Q:=\begin{ytableau}
1 & 2\\
3 & 4
\end{ytableau},
\end{equation*}
respectively. The Robinson--Schensted correspondence gives two involutions,
\begin{equation*}
w\left(P,P\right)
=2143=(1,2)(3,4)
\end{equation*}
and
\begin{equation*}
w\left(Q,Q\right)
=3412=(2,3)(1,2)(3,4)(2,3).
\end{equation*}
(The second equalities are for writing the elements as reduced products of simple reflections.) First consider the component $\overline{C(P)}$. Explicitly, by computing the flag condition \hyperlink{condition (*)}{{\bf (*)}} for the Deligne--Lusztig variety $X_{2143}$, and by computing the step (II) of Proposition~\ref{prop:Spal--Stei} for $C(P)$, one can see that
$$X_{2143}\cong \mathrm{Gr}(2,4)^F\times\left(\mathbb{P}^1\backslash\mathbb{P}^1(\mathbb{F}_q)\right)^2$$
and
$$C(P)\cong (\mathbb{P}^1)^2,$$
which suggests that $C(P)$ contains a component of $X_{2143}$ as a dense open subset, and from the argument of Theorem~\ref{thm:main1} we know that this is exactly the case, and this component is given as $\mathcal{B}_{u,2143}=\mathcal{B}_u\cap X_{2143}$.

Now consider the other component $\overline{C(Q)}$. As mentioned in the argument of Theorem~\ref{thm:main1}, Proposition~\ref{prop:stei} implies that the variety of pairs of flags at relative position $w=3412$ cut out an open dense subset $X$ of ${C(Q)}\times {C(Q)}$, and so the preimage $ {C(Q)}\cap X_{3412}$ of $X$ along the Frobenius graph embedding
$$(\mathrm{Id}, F)\colon  {C(Q)}\longrightarrow  {C(Q)}\times  {C(Q)}$$
is an open subset of $\overline{C(Q)}$; indeed, this open subset is non-empty: By fixing a standard basis $\{ e_1, e_2, e_3, e_4 \}$ of $V=\overline{\mathbb{F}}_q^4$ (over $\mathbb{F}_q$), one easily checks that it contains the flag
$$\{0\}\subseteq \{ e_1+xe_2 \}\subseteq \{  e_1+xe_2,  e_3+xe_4 \} \subseteq \{  e_1, e_2,  e_3+xe_4 \}\subseteq V$$
for any $x\in\overline{\mathbb{F}}_q\backslash\mathbb{F}_q$. This means that some component of $X_{3412}$ cuts out an open dense subset of $\overline{C(Q)}$. However, since
$$\dim X_{3412}=l((2,3)(1,2)(3,4)(2,3))=4>2=\dim\mathcal{B}_u,$$
and since the Deligne--Lusztig varieties partition the flag variety, the component $\overline{C(Q)}$ cannot contain any component of a Deligne--Lusztig variety as an open dense subset. On the other hand, by Proposition~\ref{prop:spr property} and Corollary~\ref{coro:dim of unip centraliser} we see that the possible dimensions of Springer fibres for $\mathrm{GL}_4$ are $0,1,2,3,6$, so none of the components of $X_{3412}$ can be an open subset of a Springer fibre.
\end{expl}

\vspace{2mm} 2.

\vspace{2mm} It is a natural desire that, the map $\beta(-)$ is uniquely characterised by the property given in Theorem~\ref{thm:main1}. To make this true one shall add further conditions:

\begin{expl}
Let $G=\mathrm{GL}_3$ and let 
$$u=W(u)=
\begin{bmatrix}
1 & 0 & 1   \\
0 & 1 & 0   \\
0 & 0 & 1 
\end{bmatrix}\in\mathrm{GL}_3(\mathbb{F}_q).$$
Then the components of $\mathcal{B}_u$  are labelled by the two tableaux of hook shape
\begin{equation*}
P:=\begin{ytableau}
1 & 3\\
2 
\end{ytableau}
\quad\textrm{and}\quad
Q:=\begin{ytableau}
1 & 2\\
3
\end{ytableau},
\end{equation*}
corresponding to the simple reflections $(1,2)$ and $(2,3)$, via the Robinson--Schensted correspondence, respectively. The three varieties $\mathcal{B}_u$, $X_{(1,2)}$, and $X_{(2,3)}$, are all of dimension $1$. From the argument of Theorem~\ref{thm:main1} we know that $C(P)$ contains a component of $X_{(1,2)}$ as a dense open subset and $\mathcal{B}_{u}$ does not intersect other components of $X_{(1,2)}$. Let us consider $X_{(2,3)}$. Given a flag $V_0\subseteq V_1\subseteq V_2\subseteq V_3\subseteq V$, by direct computations with the flag condition \hyperlink{condition (*)}{{\bf (*)}} and the condition that $N=u-I$ takes $V_i$ into $V_{i-1}$, one sees that $\mathcal{B}_u$ intersects $X_{(2,3)}$ only at the component (of $X_{(2,3)}$) consisting of the flags
$$\{0\}\subseteq \{ e_1 \}\subseteq \{  e_1, e_2+\lambda e_3 \} \subseteq \{  e_1, e_2,  e_3 \}=V,$$
where $\lambda$ runs over $\overline{\mathbb{F}}_q\backslash\mathbb{F}_q$. Meanwhile, it follows from the step (II) of Proposition~\ref{prop:Spal--Stei} that this component is contained in $C(Q)$. So, at this $u$, there are two choices of the value of $\beta$ fulfilling the requirement in Theorem~\ref{thm:main1}. 
\end{expl}

Thus we hope to state here a question: 

\begin{quest}
Is there a geometric property (in addition to the one asserted in Theorem~\ref{thm:main1}, but without referencing to the explicit construction \eqref{formula:beta(u)} given in its argument) making the map $\beta$ unique?
\end{quest}

\vspace{2mm} 3.

\vspace{2mm} Let $w_0$ be the longest element of $W(T)$, then $X_{w_0}$ contributes a generic (i.e.\ open dense) part of the flag variety. Quite opposite to the component containment relation in Theorem~\ref{thm:main1}, Springer fibres missed this ``largest'' Deligne--Lusztig variety at all:

\begin{expl}\label{expl:longestWeyl empty}
In this example we show that $\mathcal{B}_{u,w_0}=\mathcal{B}_u\cap X_{w_0}$ is always empty unless $u=1$. Suppose that $\mathcal{B}_{u,w_0}$ is non-empty; let $\mathcal{F}\in \mathcal{B}_{u,w_0}$ be a point in the component $\overline{C(P)}$ for some $\lambda(u)$-tableau $P$. Consider the Frobenius graph embedding 
$$(\mathrm{Id},F)\colon \overline{C(P)}\longrightarrow \overline{C(P)}\times \overline{C(P)}.$$ 
(Here $F$ preserves $\overline{C(P)}$ because $F$ preserves $C(P)$ and $F$ is a homeomorphism.) By Proposition~\ref{prop:stei} and by considering Bruhat order we get
$$``\textrm{the relative position of}\ (\mathcal{F},F\mathcal{F})"=w_0\leq w(P,P),$$
which in turn implies that $w(P,P)=w_0$. So, via the Robinson--Schensted correspondence we see that $\lambda(u)$ must be a single column diagram, that is, $u=1$, in which case $\mathcal{B}_u$ is the whole flag variety.
\end{expl}

\vspace{2mm} 4.

\vspace{2mm} Recall that (see Notation~\ref{notation:unip notations}) we have let $J(u)$ be the Jordan normal form of $u$, $d$ the number of Jordan blocks in $J(u)$, $r_i$ the sizes of each Jordan block, and $\lambda(u)$ the associated Young diagram.

\begin{defi}
A unipotent element $u\in G$ is called rectangular, if the Young diagram $\lambda(u)$ is rectangular.
\end{defi}

Note that if $u$ is rectangular, then $d$ ($=c_i, \forall i$) is a divisor of $n$ and $n/d=r_i$ for all $i$; when this is the case we denote $r_i$ by $r$. 

\vspace{2mm} As mentioned in the introduction, our original focus on the relations between Deligne--Lusztig varieties and Springer fibres comes from the smooth representation theory of the profinite group $\mathrm{GL}_{d}(\mathbb{F}_q[[\pi]])$. Indeed, if $u\in \mathcal{U}^F$ is rectangular, then the finite quotient group $\mathrm{GL}_{d}(\mathbb{F}_q[[\pi]]/\pi^r)$ acts on $\mathcal{B}_{u,w}$, because $\mathrm{GL}_{d}(\overline{\mathbb{F}}_q[[\pi]]/\pi^r)$ is naturally isomorphic to the $G$-centraliser of 
\begin{equation*}
W(u)=\begin{bmatrix}
I_d & I_d &  0 & 0 & ... & 0\\
0 & I_d & I_d & 0 & ... & 0\\
0 & 0 & I_d & 0 & ... & 0\\
... & ... & ...  & ... & ... & ... \\
0 & ... & 0  & 0 & I_d & I_d \\
0 & 0 & ... & 0 & 0 & I_d\\
\end{bmatrix},
\end{equation*}
where $I_d$ denotes the $d\times d$ identity matrix, by the ring injection from $M_d(\overline{\mathbb{F}}_q[[\pi]]/\pi^r)$ to ${M}_{n}(\overline{\mathbb{F}}_q)$:
\begin{equation}\label{formula:injection}
A_0+A_1\pi+...+A_{r-1}\pi^{r-1}\longmapsto
\begin{bmatrix}
A_0     & A_1     & ... & A_{r-2} & A_{r-1}  \\
0       & A_0     & A_1 & ...     & A_{r-2}   \\
...     & ...     & ... & ...     & ... \\
0       & ...     & 0   & A_0     & A_1   \\
0       & 0       & ... & 0       & A_0
\end{bmatrix},
\end{equation}
where $A_i\in M_d(\overline{\mathbb{F}}_q)$. (See \cite[5.3]{Chen_2019_flag_orbit} and \cite[4.6]{Chen_2020_twistetcent}; note that the notation used there is different up to a blocked transpose.) Thus we get a virtual representation
$$R_{u,w}:=\sum_i(-1)^iH_c^i(\mathcal{B}_{u,w},\overline{\mathbb{Q}}_{\ell})$$
of $\mathrm{GL}_{d}(\mathbb{F}_q[[\pi]]/\pi^r)$. Here $H_c^i(-,\overline{\mathbb{Q}}_{\ell})$ denotes the $i$-th compactly supported $\ell$-adic cohomology with $\ell$ a prime not equal to $\mathrm{char}(\mathbb{F}_q)$. Note that if $n=d$, then this construction gives the unipotent representations of $\mathrm{GL}_n(\mathbb{F}_q)$ in the sense of \cite[7.8]{DL1976}.

\begin{defi}
A representation of $\mathrm{GL}_{d}(\mathbb{F}_q[[\pi]]/\pi^r)$, where $r\geq 2$, is called primitive, if it does not factor through $\mathrm{GL}_{d}(\mathbb{F}_q[[\pi]]/\pi^{r-1})$.
\end{defi}

For example, when $u\in\mathcal{U}^F$ is rectangular with $d,r\geq 2$, from \cite[4.8]{Chen_2020_twistetcent} we known that $R_{W(u),w}$ is a primitive representation of $\mathrm{GL}_{d}(\mathbb{F}_q[[\pi]]/\pi^r)$ if $w=(1,...,z)$ is a cycle with $z<d$.

\begin{quest}
For a given rectangular $u\in\mathcal{U}^F$ (resp.\ Weyl element $w\in W(T)$), is there a reasonable characterisation of the Weyl element $w$ (resp.\ rectangular $u\in\mathcal{U}^F$) making $R_{u,w}$ primitive?
\end{quest}

We note that there is a uniform description of the representations $R_{u,w}$ for various rectangular $u\in\mathcal{U}^F$, using a complex on $G$: First consider the character-sheaf type diagram (see \cite{Lusztig_CharSh_I})
\begin{equation*}
\begin{tikzcd}
X_w & Z_w \arrow{l}[swap]{b} \arrow{r}{a} & G,
\end{tikzcd}
\end{equation*}
where 
$$Z_w:=\{(g, xB)\in G\times L^{-1}(BwB)/B \mid g^x\in B\},$$
and $a, b$ are the natural projections; this diagram is $G^F$-equivariant, where $G^F$ acts on $Z_w$ by $h\cdot(g,x)=(hgh^{-1},hx)$, on $G$ by left conjugation, and on $X_w$ by left multiplication. Then the character-sheaf type complex 
$$K:=Ra_!b^{*}\overline{\mathbb{Q}}_{\ell}\in D^b_c(G,\overline{\mathbb{Q}}_{\ell})$$ 
encodes all $R_{u,w}$: 

\begin{prop}
If $u\in \mathcal{U}^F$ is rectangular, then $R_{u,w}=\sum_i(-1)^i \mathcal{H}^i(K)_u$.
\end{prop}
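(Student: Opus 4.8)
The plan is to compute the stalk $\mathcal{H}^i(K)_u$ directly from the definition of $K=Ra_!b^*\overline{\mathbb{Q}}_\ell$ and proper base change, and then identify the resulting fibre with $\mathcal{B}_{u,w}$ using the $G^F$-equivariance already built into the diagram. First I would record that, by proper base change applied to the cartesian square obtained by pulling back $a\colon Z_w\to G$ along the inclusion $\{u\}\hookrightarrow G$, one has $\mathcal{H}^i(K)_u = H^i_c(a^{-1}(u), b^*\overline{\mathbb{Q}}_\ell) = H^i_c(a^{-1}(u),\overline{\mathbb{Q}}_\ell)$, since $b^*\overline{\mathbb{Q}}_\ell$ is just the constant sheaf. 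So the whole problem reduces to identifying the fibre $a^{-1}(u)\subseteq Z_w$ as a variety.

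Next I would unwind the definition: $a^{-1}(u)=\{(u,xB)\in \{u\}\times L^{-1}(BwB)/B \mid u^x\in B\}$, which projects isomorphically (forgetting the redundant first coordinate) onto $\{xB\in X_w \mid u^x\in B\}$. By Definition~\ref{defi:DL} the condition $xB\in X_w$ is exactly $xB\in L^{-1}(BwB)/B$, and by Definition~\ref{defi:springer} the condition $u^x\in B$ is exactly $xB\in\mathcal{B}_u$. Hence $a^{-1}(u)\cong X_w\cap\mathcal{B}_u=\mathcal{B}_{u,w}$ as varieties. Combining with the previous step gives $\mathcal{H}^i(K)_u=H^i_c(\mathcal{B}_{u,w},\overline{\mathbb{Q}}_\ell)$, and taking the alternating sum yields $\sum_i(-1)^i\mathcal{H}^i(K)_u=\sum_i(-1)^iH^i_c(\mathcal{B}_{u,w},\overline{\mathbb{Q}}_\ell)=R_{u,w}$, which is the claim. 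At this level the hypothesis that $u$ is rectangular is not strictly needed for the identity of virtual vector spaces; it is needed only to make sense of $R_{u,w}$ as a representation of $\mathrm{GL}_d(\mathbb{F}_q[[\pi]]/\pi^r)$ via the centraliser identification \eqref{formula:injection}, so I would note that the $G^F$-equivariance of the diagram refines the computation to an isomorphism of $Z_G(u)^F$-modules (and in particular of $\mathrm{GL}_d(\mathbb{F}_q[[\pi]]/\pi^r)$-modules when $u=W(u)$ is rectangular), the stabiliser of $u$ acting on the fibre $a^{-1}(u)$ compatibly with its action on $\mathcal{B}_{u,w}$.

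The only genuinely delicate point is the bookkeeping of the $G^F$-action: one must check that under the isomorphism $a^{-1}(u)\cong\mathcal{B}_{u,w}$ the residual $Z_G(u)^F$-action coming from $h\cdot(g,xB)=(hgh^{-1},hxB)$ matches the left-translation action of $Z_G(u)^F$ on $\mathcal{B}_{u,w}\subseteq G/B$, and then invoke the ring injection \eqref{formula:injection} identifying $\mathrm{GL}_d(\mathbb{F}_q[[\pi]]/\pi^r)$ with $Z_G(W(u))^F$. This is routine but is where all the substance lies; everything else is proper base change and chasing definitions. I would therefore organise the write-up as: (1) proper base change to reduce to the fibre; (2) unwinding the fibre to $\mathcal{B}_{u,w}$; (3) matching the equivariant structures; (4) conclude.
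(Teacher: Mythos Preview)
Your proposal is correct and follows exactly the paper's argument: identify $a^{-1}(u)\cong\mathcal{B}_{u,w}$ and apply proper base change along $\{u\}\hookrightarrow G$. The paper compresses this into two sentences, while you spell out the fibre identification and the equivariance bookkeeping, but the substance is identical.
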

\begin{proof}
Note that $\mathcal{B}_{u,w}\cong a^{-1}(u)$, so the assertion formally  follows from the proper base change along $\{u\}\hookrightarrow G$.
\end{proof}

We end with a (non-)smoothness property of the above diagram.

\begin{prop}
The variety $Z_w$ and the morphism $b$ are smooth. However, the morphism $a$ is smooth only for $n=1$.
\end{prop}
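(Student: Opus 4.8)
I would prove the three assertions separately, using the fibre-bundle structure of $Z_w$ over $X_w$ in each case.

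First, for the smoothness of $Z_w$ and of $b$: the key observation is that the second projection $Z_w\to L^{-1}(BwB)/B = X_w$ (call it $b$) exhibits $Z_w$ as a fibre bundle over $X_w$ whose fibre over a point $xB$ is the set $\{g\in G\mid g^x\in B\} = xBx^{-1}$, a left translate of the Borel subgroup $B$. More precisely, I would write $Z_w = \{(g,xB)\mid x^{-1}gx\in B\}$ and note that the map $(g,xB)\mapsto xB$ is, Zariski-locally on $X_w$ (after choosing a section of $G\to G/B$ over a trivialising open set), isomorphic to a product $U\times B$ with $U\subseteq X_w$ open. Hence $b$ is a (locally trivial, in the étale or even Zariski topology) fibre bundle with smooth fibres $B\cong \mathbb{A}^{v_G}\times T$, so $b$ is smooth of relative dimension $\dim B$. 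Since $X_w$ is smooth by Proposition~\ref{prop:DL property}, the total space $Z_w$ is smooth as well (smooth over a smooth base). Concretely one can also exhibit $Z_w$ as $G\times^B (B\text{-orbit data})$, but the bundle-over-$X_w$ description is cleanest.

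Second, for the failure of smoothness of $a\colon Z_w\to G$ when $n\geq 2$: here $a^{-1}(u) \cong \mathcal{B}_{u,w}$ for $u$ unipotent (as used in the preceding proposition), and more generally $a^{-1}(g)$ is a union of intersections of a Deligne--Lusztig variety with a "Springer-type" fibre for the semisimple-times-unipotent parts of $g$. The obstruction to smoothness of $a$ is a dimension/fibre jump: $Z_w$ is irreducible (being a bundle over the irreducible... — wait, $X_w$ need not be irreducible; I would instead note $Z_w$ is equidimensional of dimension $l(w)+\dim B = l(w)+v_G+n$ by the bundle structure) while the fibres $a^{-1}(g)$ have dimension depending on the conjugacy class of $g$: over the regular semisimple locus the fibre $a^{-1}(g)$ is finite (for suitable $w$) or at any rate has one dimension, whereas over $g=1$ we get $a^{-1}(1)=G/B$ of dimension $v_G$. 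A morphism between varieties with $Z_w$ equidimensional that has fibres of non-constant dimension cannot be flat, hence cannot be smooth, as soon as such a jump occurs — which happens precisely when $n\geq 2$ (for $n=1$, $G=\mathrm{GL}_1$, everything is a point or $\mathbb{G}_m$ and $a$ is an isomorphism onto $\mathbb{G}_m$, visibly smooth). To make the jump rigorous I would (i) compute $\dim Z_w = l(w)+v_G+n$ from the bundle structure, (ii) exhibit two fibres of $a$ of different dimensions — e.g. $a^{-1}(1)=X_w'$... more carefully: take $g$ regular semisimple with $a^{-1}(g)$ of dimension $l(w)$ at most (it embeds in $X_w$) versus $g=1$ where $a^{-1}(1)\cong\mathcal{B}_{1,w}=X_w$ has dimension $l(w)$ too — so I should instead contrast the generic fibre dimension $\dim Z_w - \dim G = l(w)+v_G+n - (v_G+n+\text{(number of positive roots?)})$...

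The main obstacle, and the step I would think about most carefully, is exactly pinning down which two fibres of $a$ to compare. The cleanest route: $a$ is smooth iff it is flat with smooth fibres; since $Z_w$ is equidimensional of dimension $l(w)+v_G+n$ and $G$ is irreducible of dimension $n^2 = v_G+n$ (here $v_G=\binom{n}{2}$), flatness would force every nonempty fibre to have dimension $l(w)$. But the fibre over $u=1$ is $\mathcal{B}_{1,w}=X_w$, of dimension $l(w)$ — consistent — while the fibre over a regular unipotent $u$ is $\mathcal{B}_{u,w}$, and by Proposition~\ref{prop:spr property} $\dim\mathcal{B}_u = 0$ for regular $u$, so $\mathcal{B}_{u,w}$ has dimension $0$; thus unless $l(w)=0$ we already get a fibre-dimension jump, and for $l(w)=0$ (i.e. $w=1$, $X_w = (G/B)^F$ finite) the map $a$ has $0$-dimensional source mapping to $(n^2)$-dimensional $G$, hence is not smooth for $n\geq 2$ either. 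So in all cases $n\geq 2$ the fibre dimension of $a$ is non-constant (or the relative dimension is negative), contradicting smoothness; for $n=1$ one checks directly that $a$ is an isomorphism $Z_w\xrightarrow{\sim}\mathbb{G}_m=G$. This gives the dichotomy, and I would present it by first recording $\dim Z_w$ via the bundle structure and then running this fibre-dimension comparison.
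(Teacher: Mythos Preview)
Your fibre-bundle argument for the smoothness of $Z_w$ and $b$ is correct and matches the paper's approach: the paper uses faithfully flat descent along $L^{-1}(BwB)\to X_w$, which after the variable change $y=g^x$ yields $\widetilde{Z_w}\cong B\times L^{-1}(BwB)$ --- exactly your local trivialisation made global.

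For the failure of smoothness of $a$, your overall strategy (smooth $\Rightarrow$ flat $\Rightarrow$ constant fibre dimension, then exhibit a jump) is right and is what the paper does as well. But the execution has concrete errors:

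\begin{itemize}
\item You write $\dim G = n^2 = v_G+n$; this is false for $n\geq 2$, since $v_G=\binom{n}{2}$ gives $n^2 = 2v_G+n$. With the correct arithmetic, flatness of $a$ would force nonempty fibres to have dimension $l(w)+\dim B-\dim G = l(w)-v_G$, not $l(w)$.
\item This actually \emph{simplifies} your argument: the fibre $a^{-1}(1)\cong X_w$ has dimension $l(w)$, which already contradicts $l(w)-v_G$ as soon as $v_G>0$, i.e.\ $n\geq 2$. No regular-unipotent comparison is needed. (This is precisely the paper's computation, carried out at a central element.)
\item Your treatment of $w=1$ is wrong: you claim $a$ has $0$-dimensional source, but $Z_1$ is a $B$-bundle over the finite set $X_1=(G/B)^F$, so $\dim Z_1=\dim B$, not $0$.
\item A smaller gap: in your regular-unipotent step you assert $\dim\mathcal{B}_{u,w}=0$ without checking nonemptiness. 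One can arrange this (take $g\in L^{-1}(BwB)$ and $u={}^g u_0$ with $u_0$ the standard regular unipotent), but by the previous points the detour is unnecessary.
\end{itemize}

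So the plan is sound, but fix the value of $\dim G$; once you do, the argument collapses to a single fibre comparison at $g=1$ and coincides with the paper's proof.
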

\begin{proof}
We follow the idea of \cite[2.5.2]{Lusztig_CharSh_I} to use a faithful flat descent. Let $\widetilde{Z_w}$ be the base change of $Z_w\subseteq G\times L^{-1}(BwB)/B$ along the morphism
\begin{equation*}
G\times L^{-1}(BwB)\longrightarrow G\times L^{-1}(BwB)/B.
\end{equation*}
This is faithfully flat since $B$ is solvable (hence the quotient $L^{-1}(BwB)\rightarrow L^{-1}(BwB)/B$ admits local sections). Then by \cite[17.7.7]{grothendieck_elements_1967} it suffices to prove that the variety $\widetilde{Z_w}=\{(g,x)\in G\times L^{-1}(BwB)\mid g^x\in B\}\subseteq G\times G$ and the morphism $\widetilde{b}\colon\widetilde{Z_w}\rightarrow X_w$ (extending $b$) are smooth. Applying the variable change $y=g^x$ we get
\begin{equation*}
\widetilde{Z_w}\cong \{(y,x)\in G\times L^{-1}(BwB)\mid y\in B\}=B\times L^{-1}(BwB),
\end{equation*}
and $\widetilde{b}$ reads as $(y,x)\mapsto xB$. As the Lang morphism $L$ is \'etale, the assertion on $Z_w$ and $b$ follows.

\vspace{2mm} For the assertion on $a$, again by the faithful flat descent it suffices to show that the morphism $\widetilde{Z_w}\rightarrow G$ given by
$$\widetilde{a}\colon (y,x)\in B\times L^{-1}(BwB)\longmapsto xyx^{-1}$$
is not smooth. Actually this morphism is not flat: Note that the fibre of $\widetilde{a}$ at any central element $c\in Z(G)$ is $\{c\}\times L^{-1}(BwB)\subseteq B\times L^{-1}(BwB)$. So, if $\widetilde{a}$ is flat then we have (see e.g.\ \cite[4.3.12]{liu_algebraic_2006})
$$\dim L^{-1}(BwB)=\dim B+ \dim L^{-1}(BwB)-\dim G,$$
which is impossible unless $n=1$.
\end{proof}

In the above argument, note that for any $b\in B$, the elements $(y,x)$ and $(b^{-1}yb,xb)$ have the same image under $\widetilde{a}$. So, the fibre of $\widetilde{a}$ at a closed point is either empty or has dimension at least $\dim B$, which implies that (via \cite[4.3.12]{liu_algebraic_2006}) 
$$\dim B\leq \widetilde{a}^{-1}(xyx^{-1})= \dim B+\dim L^{-1}(BwB)-\dim G$$ 
if $\widetilde{a}$ is flat at $(y,x)$. Thus actually $\widetilde{a}$ cannot be flat at any point unless $w$ is the longest element $w_0$.

\bibliographystyle{alpha}
\bibliography{zchenrefs}

\begin{thebibliography}{Che20b}

\bibitem[BV21]{Bezru--Varshav_2021_L-packets}
Roman Bezrukavnikov and Yakov Varshavsky.
\newblock Affine {S}pringer fibers and depth zero {L}-packets.
\newblock {\em arXiv preprint arXiv:2104.13123}, 2021.

\bibitem[Car93]{Carter1993FiGrLieTy}
Roger~W. Carter.
\newblock {\em Finite groups of {L}ie type}.
\newblock Wiley Classics Library. John Wiley \& Sons Ltd., Chichester, 1993.
\newblock Conjugacy classes and complex characters, Reprint of the 1985
  original, A Wiley-Interscience Publication.

\bibitem[Che20a]{Chen_2019_flag_orbit}
Zhe Chen.
\newblock Flags and orbits of connected reductive groups over local rings.
\newblock {\em Math. Ann.}, 376(3-4):1449--1466, 2020.

\bibitem[Che20b]{Chen_2020_twistetcent}
Zhe Chen.
\newblock Twisting operators and centralisers of lie type groups over local
  rings.
\newblock {\em arXiv preprint arXiv:2006.02145}, 2020.

\bibitem[DL76]{DL1976}
Pierre Deligne and George Lusztig.
\newblock Representations of reductive groups over finite fields.
\newblock {\em Ann. of Math. (2)}, 103(1):103--161, 1976.

\bibitem[GD67]{grothendieck_elements_1967}
Alexander Grothendieck and Jean Dieudonn\'e.
\newblock {\em \'{E}l\'ements de g\'eom\'etrie alg\'ebrique {IV} \'{E}tude
  locale des sch\'emas et des morphismes de sch\'emas}.
\newblock Inst. Hautes \'Etudes Sci. Publ. Math., 1964-1967.

\bibitem[Hum95]{Humphreys_1995_Conj_ss_algGp}
James~E. Humphreys.
\newblock {\em Conjugacy classes in semisimple algebraic groups}, volume~43 of
  {\em Mathematical Surveys and Monographs}.
\newblock American Mathematical Society, Providence, RI, 1995.

\bibitem[Knu98]{Knuth_vol3}
Donald~E. Knuth.
\newblock {\em The art of computer programming. {V}ol. 3}.
\newblock Addison-Wesley, Reading, MA, 1998.
\newblock Sorting and searching, Second edition.

\bibitem[Liu06]{liu_algebraic_2006}
Qing Liu.
\newblock {\em Algebraic geometry and arithmetic curves}, volume~6 of {\em
  Oxford Graduate Texts in Mathematics}.
\newblock Oxford University Press, 2006.

\bibitem[Lus85]{Lusztig_CharSh_I}
George Lusztig.
\newblock Character sheaves {I}.
\newblock {\em Adv. in Math.}, 56(3):193 -- 237, 1985.

\bibitem[Lus90]{Lusztig_Green_Functions_CharSh}
George Lusztig.
\newblock Green functions and character sheaves.
\newblock {\em Ann. of Math. (2)}, 131(2):355--408, 1990.

\bibitem[OCV11]{Omeara_et_al_WeyrBk}
Kevin~C. O'Meara, John Clark, and Charles~I. Vinsonhaler.
\newblock {\em Advanced topics in linear algebra}.
\newblock Oxford University Press, Oxford, 2011.
\newblock Weaving matrix problems through the Weyr form.

\bibitem[Sho88]{Shoji1988GeomOrbits}
Toshiaki Shoji.
\newblock Geometry of orbits and {S}pringer correspondence.
\newblock {\em Ast\'erisque}, (168):9, 61--140, 1988.
\newblock Orbites unipotentes et repr{\'e}sentations, I.

\bibitem[Spa76]{Spaltenstein_fixpt_uniptransform_flagmfld}
Nicolas Spaltenstein.
\newblock The fixed point set of a unipotent transformation on the flag
  manifold.
\newblock {\em Nederl. Akad. Wetensch. Proc. Ser. A}, 38(5):452--456, 1976.

\bibitem[Spa82]{Spaltenstein_1982_Borel_bk}
Nicolas Spaltenstein.
\newblock {\em Classes unipotentes et sous-groupes de {B}orel}, volume 946 of
  {\em Lecture Notes in Mathematics}.
\newblock Springer-Verlag, Berlin-New York, 1982.

\bibitem[Spr76]{Springer_1976_TrigSum}
Tonny~A. Springer.
\newblock Trigonometric sums, {G}reen functions of finite groups and
  representations of {W}eyl groups.
\newblock {\em Invent. Math.}, 36:173--207, 1976.

\bibitem[SS70]{Springer--Steinberg_1970_Conj}
Tonny~A. Springer and Robert Steinberg.
\newblock Conjugacy classes.
\newblock In {\em Seminar on {A}lgebraic {G}roups and {R}elated {F}inite
  {G}roups ({T}he {I}nstitute for {A}dvanced {S}tudy, {P}rinceton, {N}.{J}.,
  1968/69)}, Lecture Notes in Mathematics, Vol. 131, pages 167--266. Springer,
  Berlin, 1970.

\bibitem[Sta21]{Stasinski_rep_SL_n_length_two}
Alexander Stasinski.
\newblock Representations of {${\rm SL}_n$} over finite local rings of length
  two.
\newblock {\em J. Algebra}, 566:119--135, 2021.

\bibitem[Ste76]{Steinberg_desingularisation_1976}
Robert Steinberg.
\newblock On the desingularization of the unipotent variety.
\newblock {\em Invent. Math.}, 36:209--224, 1976.

\bibitem[Ste88]{Steinberg_occurrence}
Robert Steinberg.
\newblock An occurrence of the {R}obinson-{S}chensted correspondence.
\newblock {\em J. Algebra}, 113(2):523--528, 1988.

\bibitem[Tym06]{Tymoczko_LinearConditionFlag}
Julianna~S. Tymoczko.
\newblock Linear conditions imposed on flag varieties.
\newblock {\em Amer. J. Math.}, 128(6):1587--1604, 2006.

\end{thebibliography}

\end{document}